%
%
%
%
\documentclass{amsart}

\usepackage{amssymb,color}
\usepackage{amsfonts}
\usepackage{amsmath}
\usepackage{euscript}
\usepackage{enumerate}
\usepackage{graphics}

\newtheorem{theorem}{Theorem}[section]
\newtheorem{lemma}[theorem]{Lemma}
\newtheorem{note}[theorem]{Note}
\newtheorem{prop}[theorem]{Proposition}
\newtheorem{cor}[theorem]{Corollary}
\newtheorem{exa}[theorem]{Example}

\newtheorem*{Theorem1'}{Theorem 1'}

\theoremstyle{definition}

\theoremstyle{remark}

\numberwithin{equation}{section}



\newcommand \Q{{\mathbb Q}}

\newcommand \C{{\mathbb C}}
\newcommand \ke{{\mathrm {ker}}}
\newcommand \End{{\mathrm {End}}}

\newcommand \dm{{\mathrm {dim}}}
\newcommand \chr{{\mathrm {char}}}

\newcommand \Z{{\mathbb Z}}
\newcommand \GL{{\mathrm {GL}}}

\newcommand \Aut{{\mathrm {Aut}}}
\newcommand \la{{\lambda}}

\newcommand \al{{\alpha}}
\newcommand \bt{{\beta}}

\newcommand \Soc{{\mathrm{Soc}}}
\newcommand \Rad{{\mathrm{Rad}}}

\newcommand \SR{{\mathrm{S}}}
\newcommand \F{{\mathrm{F}}}
\newcommand \T{{\mathrm{T}}}
\newcommand \core{{\mathrm{core}}}

\begin{document}

\title [\small{Groups having a faithful irreducible representation}] {\small{Groups having a faithful irreducible representation}}

\author{Fernando Szechtman}
\address{Department of Mathematics and Statistics, University of Regina}
\email{fernando.szechtman@gmail.com}
\thanks{The author was supported in part by an NSERC discovery grant}

\subjclass[2010]{20C15, 20C20}



\keywords{irreducible representation; faithful representation}

\begin{abstract} We address the problem of finding necessary and sufficient conditions for an arbitrary group, not necessarily
finite, to admit a faithful irreducible representation over
an arbitrary field.
\end{abstract}

\maketitle

\section{Introduction}

We are interested in the problem of finding necessary and
sufficient conditions for a group to have a faithful irreducible
linear representation. Various criteria have been found when the group in question is finite, as  described in~\S\ref{sec:h},
so we will concentrate mainly on infinite groups.

Let $G$ be an arbitrary group. Recall that $\Soc(G)$ is the
subgroup of $G$ generated by all minimal normal subgroups of $G$.
It is perfectly possible for $\Soc(G)$ to be trivial.
We denote by $\SR(G)$, $\T(G)$ and $\F(G)$ the subgroups of $\Soc(G)$ generated by
all minimal normal subgroups of $G$ that are non-abelian, torsion abelian and torsion-free abelian, respectively.
For each prime $p$, let $\T(G)_p$ be the $p$-part of~$\T(G)$. It is a vector space over $F_p$. Let $\Pi(G)$ be the set of all primes $p$ such
that $\T(G)_p$ is non-trivial.

{\color{blue} Throughout the paper we view $\T(G)\F(G)$ as a $\Z
G$-module, with $G$ acting by conjugation. Thus, a minimal normal
subgroup of $G$ contained in $\T(G)\F(G)$ is nothing but an
irreducible $\Z G$-submodule of $\T(G)\F(G)$}.

A normal subgroup $N$ of $G$ is said to be essential if every non-trivial normal subgroup of $G$ intersects $N$ non-trivially.


{\color{blue} With this level of generality, it seems unavoidable
that the desired necessary and sufficient conditions be stated
separately. Our main results are as follows.}

{\color{blue}
\begin{theorem}\label{main} Let $G$ be a group and let $K$ be a
field such that:

(1) $\Soc(G)$ is essential;

(2) If $K$ has prime characteristic $p$ and $M$ is a minimal
normal subgroup of $G$ as well as a non-abelian $p$-group that is
not finitely generated, then $M$ admits a non-trivial
irreducible representation over $K$.

(3) $\chr(K)\notin\Pi(G)$;

(4) $\T(G)$ has a subgroup $S$ such that $\T(G)/S$ is locally
cyclic and $\core_G(S)=1$.

\noindent Then $G$ has a faithful irreducible representation over
$K$.
\end{theorem}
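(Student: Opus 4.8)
The plan is to exploit condition (1) to push the whole problem down to the socle, and then to build an irreducible module whose restriction to $\Soc(G)$ is faithful. For any representation $V$ the subgroup $\ker V$ is normal in $G$, so condition (1) guarantees that $V$ is faithful as soon as $\ker V\cap\Soc(G)=1$, i.e.\ as soon as $V$ is faithful on $\Soc(G)$. I would first record the structural fact that distinct minimal normal subgroups commute (their commutator lies in their trivial intersection), so that $\Soc(G)$ is the internal product of the three pairwise-commuting pieces $R:=\SR(G)$, $\T(G)$ and $\F(G)$; write $A:=\T(G)\F(G)$ for its abelian part, regarded as the $\Z G$-module of the statement.

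The non-abelian piece is the easy one. Each non-abelian minimal normal subgroup $M$ is characteristically simple, hence (being non-abelian) centerless, and minimal as a normal subgroup; consequently any representation that is merely \emph{nontrivial} on $M$ is automatically \emph{faithful} on $M$, since $\ker V\cap M$ is a normal subgroup of $G$ lying inside the minimal normal subgroup $M$. Moreover a $G$-normal subgroup of $R$ meeting every factor trivially centralizes $R$, hence lies in $Z(R)=1$; so being nontrivial on every non-abelian minimal normal subgroup already forces faithfulness on all of $R$. For the ordinary factors (products of non-abelian simple groups) such nontrivial irreducibles exist by standard theory, and condition (2) supplies them in the single delicate case of a non-finitely-generated non-abelian $p$-group in characteristic $p$, where faithful irreducibles may genuinely fail to exist.

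The abelian piece is where conditions (3) and (4) enter, and where faithfulness on each minimal normal subgroup is \emph{not} enough, since an abelian socle can contain $G$-normal ``diagonal'' subgroups meeting every factor trivially. Here I would use condition (4) to choose $S\le\T(G)$ with $\T(G)/S$ locally cyclic and $\core_G(S)=1$; a locally cyclic torsion group embeds into $\Q/\Z$, and condition (3), by keeping $\chr(K)$ out of $\Pi(G)$, makes the requisite roots of unity available in $\overline{K}$, yielding a linear character of $\T(G)$ with kernel exactly $S$. Combining this with an analogous (infinite-order) character on $\F(G)$ produces a character $\lambda$ of $A$ whose kernel has trivial $G$-core. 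The decisive consequence is that $\bigcap_{g}\ker(\lambda^{g})=\core_G(\ker\lambda)=1$, so \emph{any} module whose restriction to $A$ is a sum of $G$-conjugates of $\lambda$ is faithful on $A$.

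The remaining and genuinely hard step is the assembly: manufacturing a single \emph{irreducible} $KG$-module realizing all of this at once. I would form, over $\Soc(G)=R\times A$, the external tensor product $W=\rho_R\boxtimes\lambda$ of a suitable nontrivial irreducible $\rho_R$ of $R$ with the character $\lambda$ — an external tensor over a direct product, so that its restriction to $R$ is $\rho_R$ and to $A$ is $\lambda$, with no scalar cancellation — and then pass to $G$ by Clifford theory, taking $V$ to be an irreducible constituent of the module induced from the stabiliser of $W$. On restriction to $\Soc(G)$ such a $V$ is a sum of $G$-conjugates of $W$, hence nontrivial on every non-abelian factor and, by the core computation, faithful on $A$, so faithful on $\Soc(G)$; essentiality then upgrades this to faithfulness on $G$. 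The obstacles to watch are precisely those of infinite Clifford theory — the existence and irreducibility of the induced constituent over infinite index and possibly infinite conjugacy classes of characters — together with Galois descent from $\overline{K}$ back to $K$, which replaces each character by the sum of its conjugates without enlarging the kernel.
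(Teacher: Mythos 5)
Your skeleton (reduce to faithfulness on $\Soc(G)$ via essentiality, split the socle into $\SR(G)$, $\T(G)$ and $\F(G)$, treat each piece, assemble) is indeed the paper's, but your assembly step is a genuine gap. You invoke infinite Clifford theory: induce from the stabiliser of $W$ and take an irreducible constituent $V$ whose restriction to $\Soc(G)$ is a sum of $G$-conjugates of $W$. No such theory is available when the normal subgroup has infinite index: the result of this kind that the paper quotes, \cite[Theorem 7.2.16]{Pa}, requires $[G:N]<\infty$, while here $[G:\Soc(G)]$ is unrestricted; restrictions of irreducibles to infinite-index normal subgroups need not be completely reducible, and the stabiliser/constituent formalism breaks down. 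The paper's substitute is the much weaker relation of \emph{lying over}: by \cite[Lemma 6.1.2]{Pa}, a proper left ideal of $K\Soc(G)$ generates a proper left ideal of $KG$ (Lemma \ref{elr1}), so some irreducible $KG$-module $V$ contains a copy of the chosen $\Soc(G)$-module as a $K\Soc(G)$-submodule. That containment is all kernel control needs: if $N\unlhd G$ lies in $\T(G)\F(G)$ and acts trivially on $V$, it acts trivially on the embedded copy, hence $N\le\core_G(\ker\lambda)=1$. Your ``sum of conjugates'' requirement is thus simultaneously unobtainable and unnecessary.

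Two further steps are asserted rather than proved. First, for $\F(G)$ there is no analogue of hypothesis (4): that the kernel of your ``analogous infinite-order character'' has trivial $G$-core must be constructed, and this is where the paper works hardest (Lemmas \ref{pinf2} and \ref{pinf3}). One must map the torsion-free factors $A_i$ onto \emph{independent} subgroups of the multiplicative group of a field --- distinct Pr\"ufer groups $\Z_{p_i^\infty}$ when there are countably many factors, and independent copies of $\Q$ inside $L^*$ with $L$ algebraically closed of cardinality $|I|$ otherwise --- and then use the irreducibility of each $A_i$ as a $\Z G$-module to show that no diagonal normal subgroup acts trivially; without this independence the core can easily be non-trivial. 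Second, your Galois descent from $\overline{K}$ to $K$ is unjustified: your characters have infinite image (a large subgroup of $\Q/\Z$ on $\T(G)$, elements of infinite order on $\F(G)$), hence generate infinite extensions of $K$, and ``sum of conjugates'' is not a descent mechanism there; the Note following Corollary \ref{xr2} shows that scalar restriction along an infinite extension can destroy all irreducible submodules. The paper descends only along the finite radical extensions $K[\zeta]/K$, $\zeta^p=1$, separately for each $p\in\Pi(G)$ (Corollary \ref{xr2}, Lemma \ref{tor}), reassembling afterwards with Lemma \ref{equis2}, and on $\F(G)$ it never leaves $K$ at all (in Lemma \ref{pinf} the module is the big field itself viewed as a $K$-space). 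Finally, a smaller slip: non-abelian minimal normal subgroups of infinite groups need not be products of simple groups (McLain's group is characteristically simple and is not), so ``standard theory'' does not cover your ``ordinary factors''; the paper uses Lemma \ref{elr3} for non-$p$-groups and \cite[Theorem 6.3]{Pa2} for finitely generated non-abelian $p$-groups in characteristic $p$, a case your taxonomy omits.
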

}

{\color{blue} Although condition (1) plays a critical role for us,
it is certainly not necessary, as illustrated by the free group
$F\{x,y\}$ on 2 generators. It has no minimal normal subgroups and
can be
faithfully represented, e.g. via $x\mapsto \left(%
\begin{array}{cc}
  1 & 2 \\
  0 & 1 \\
\end{array}%
\right)$, $y\mapsto \left(%
\begin{array}{cc}
  1 & 0 \\
  2 & 1 \\
\end{array}%
\right)$, as in irreducible subgroup of $\GL_2(\C)$ (cf. \cite[\S
2.1]{Ro}). }

{\color{blue} In connection to condition (2), it is an open
question whether a non-abelian simple $p$-group exists lacking
non-trivial irreducible representations in prime
characteristic~$p$. The answer is known to be negative under the
additional assumption that the group be finitely generated, as
shown in \cite[Theorem 6.3]{Pa2}. Regardless of the outcome of
this question, let $M=M(\Q,\leq, F_p)$ be the McLain group
\cite{M}, where $\leq$ is the usual order on the rational field
$\Q$ and $p$ is a prime. Then $M$ is a minimal normal subgroup of
$G=M\rtimes\Aut(M)$ (in fact, $M=\Soc(G)$) as well as a
non-abelian $p$-group that is not finitely generated. Moreover,
the only irreducible $F_pM$-module is the trivial one (since $M$
is a locally finite $p$-group). Groups like $G$ lie beyond the
scope of Theorem \ref{main}. Perhaps surprisingly,
$M\rtimes\Aut(M)$ does have a faithful irreducible representation
over $F_p$ (see \cite{ST} for details). }

In regards to the necessity of conditions (3) and (4), we have the
following result.

\begin{theorem}\label{mca2} Let $G$ be a group such
that $[G:C_G(N)]<\infty$ for every minimal normal subgroup $N$ of
$G$ contained in $T(G)$. Let $K$ be a field and suppose that $G$
admits a faithful irreducible representation over $K$. Then
$\chr(K)\notin\Pi(G)$ and $\T(G)$ has a subgroup $S$ such that
$\T(G)/S$ is locally cyclic and $\core_G(S)=1$.
\end{theorem}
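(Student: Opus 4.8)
The plan is to treat the two conclusions separately, deriving both from a Clifford-type analysis of the faithful irreducible $KG$-module $V$ on restriction to the abelian torsion normal subgroup $A:=\T(G)$. Two preliminary observations organize everything. First, distinct minimal normal subgroups centralize one another, so $A$ is abelian and, being generated by minimal normal subgroups, is a semisimple $\Z G$-module; in particular any minimal normal $N\le A$ is a direct factor $A=N\times A''$ with $A''$ again $G$-invariant. Second, the standing hypothesis $[G:C_G(N)]<\infty$ forces each minimal normal $N\le A$ to be \emph{finite}: the conjugation action factors through the finite group $G/C_G(N)$, and $N$ is an irreducible module over the finite-dimensional algebra $F_p[G/C_G(N)]$ (for the prime $p$ with $N\le\T(G)_p$), hence finite-dimensional over $F_p$.

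\textbf{Condition (3).} Suppose $\chr(K)=p\in\Pi(G)$. Then $\T(G)_p\ne1$ contains a minimal normal subgroup $N$, which by the above is a finite elementary abelian $p$-group, and $\rho|_N$ is faithful. Since $N$ is a finite $p$-group and $\chr(K)=p$, the augmentation ideal of $KN$ is nilpotent, so the fixed space $V^N=\{v:nv=v\ \forall n\in N\}$ is nonzero. As $N$ is normal in $G$, the space $V^N$ is a nonzero $KG$-submodule, whence $V^N=V$ by irreducibility; but then $N\subseteq\ke\rho=1$, a contradiction. Therefore $\chr(K)\notin\Pi(G)$, and in particular $\chr(K)$ divides the order of no element of $A$.

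\textbf{Condition (4): reduction.} I claim it suffices to produce a simple $KA$-submodule $W\le V$. Indeed, such a $W$ is isomorphic to $KA/\mathfrak m=:L$, a field, on which $A$ acts through a homomorphism $A\to L^\times$; its kernel $S=\{a\in A: aw=w\ \forall w\in W\}$ satisfies $A/S\hookrightarrow L^\times$. As $A$ is torsion, its image is a torsion subgroup of the multiplicative group of a field, hence locally cyclic, so $\T(G)/S$ is locally cyclic. For the core, note $gSg^{-1}=\{a\in A: a|_{gW}=\mathrm{id}\}$, so $\core_G(S)$ fixes $\sum_{g}gW$ pointwise; this sum is a nonzero $KG$-submodule, hence equals $V$ by irreducibility, and therefore $\core_G(S)=A\cap\ke\rho=1$. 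Equivalently, $\Soc_A(V)$ is a $KG$-submodule, so irreducibility already forces $V|_A$ to be semisimple once one simple $A$-submodule is known to exist.

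\textbf{The main obstacle: existence of a simple $KA$-submodule.} When $A$ has only finitely many minimal normal subgroups this is routine by peeling them off one at a time: fixing a minimal normal $N\le A$, the $N$-isotypic components of $V$ are finite in number (as $N$ is finite and $\chr(K)\nmid|N|$), are permuted by $G$, and by irreducibility form a single $G$-orbit summing to $V$; choosing one, say $V^{[\mu]}$, its setwise stabilizer $H$ has finite index, contains $A$, and $V^{[\mu]}$ is an irreducible $KH$-module on which $N$ acts by a single type. Iterating along $A=N\times A''$ strips $A$ down and terminates in a one-dimensional $A$-action, i.e.\ a simple $A$-submodule. The difficulty is the case where $A$ has infinitely many minimal normal factors: the peeling produces a decreasing net of nonzero isotypic subspaces $V_F$ indexed by the finite subsums $A_F\le A$, together with a compatible (hence convergent, by compactness of the dual) choice of $A_F$-types, and one must show the limiting eigenspace $\bigcap_F V_F$ is \emph{nonzero}. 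I expect this to be the crux, to be handled by a Zorn or inverse-limit argument exploiting that each $V_F$ is cut out by an idempotent of the commutative von Neumann regular algebra $KA$ and that $V$ is cyclic over $KG$; once $\Soc_A(V)\ne0$ is secured, the reduction above completes the proof.
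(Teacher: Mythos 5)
Your argument for $\chr(K)\notin\Pi(G)$ is correct and complete (the finiteness of each minimal normal $N\le\T(G)$, which you rightly deduce from $[G:C_G(N)]<\infty$, is what justifies nilpotency of the augmentation ideal), and your reduction of condition (4) to the existence of a simple $K\T(G)$-submodule of $V$ is sound as a sufficiency claim. But the proof stops exactly where it needed to start, and the step you defer --- non-vanishing of $\Soc_A(V)$ for $A=\T(G)$ --- is not merely unproved: it is false under the hypotheses of the theorem, so no Zorn, inverse-limit or compactness argument can supply it. Concretely, let $G$ be the restricted direct product of countably many copies $G_i$ of $S_3$ and $K=\C$. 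The minimal normal subgroups of $G$ are exactly the subgroups $N_i\cong A_3$, each with $[G:C_G(N_i)]=2$, so the hypothesis holds and $A=\T(G)=\bigoplus_i N_i$. Let $V_i$ be the $2$-dimensional irreducible $\C G_i$-module, write $V_i|_{N_i}=\C e_i^{+}\oplus\C e_i^{-}$ for its two (non-trivial) eigenlines, put $e_i=e_i^{+}+e_i^{-}$, and let $V$ be the restricted tensor product of the pairs $(V_i,e_i)$, i.e.\ the union over finite sets $F$ of the subspaces $V_F=\bigl(\bigotimes_{i\in F}V_i\bigr)\otimes\bigotimes_{j\notin F}e_j$. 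Each $V_F$ is a faithful irreducible module for $\prod_{i\in F}G_i$, from which one checks that $V$ is a faithful irreducible $\C G$-module. Yet $V$ has no simple $\C A$-submodule: such a submodule would be spanned by a common eigenvector $v$ of $A$; writing $v=u\otimes e_i\otimes\bigl(\bigotimes_{j\notin F\cup\{i\}}e_j\bigr)$ for any $i$ outside the finite level $F$ at which $v$ lives, the eigenvector equation for a generator of $N_i$ forces $\omega=\bar\omega$, a contradiction. So the intersection $\bigcap_F V_F$ in your "decreasing net" is genuinely zero here, even though the conclusion of the theorem holds (take $S=\ke\bigl(\prod_i\la_i\bigr)$ for any non-trivial characters $\la_i$ of $N_i$).

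What your approach misses --- and what the paper's proof exploits --- is that the characters of the various minimal normal subgroups never need to be realized simultaneously on a single subspace of $V$. The paper works one homogeneous component at a time: for $p\in\Pi(G)$ and a minimal normal $N\le\T(G)_p$ with $F_pG$-homogeneous component $M$, the hypothesis gives $[G:H]<\infty$ for $H=C_G(N)=C_G(M)$, so by \cite[Theorem 7.2.16]{Pa} the restriction $V|_{KH}$ has an irreducible submodule $W$; this is the only place finite index is used, and it is unavailable for $A$ itself, whose centralizer has infinite index in the example above. Since $M$ is central in $H$, it acts on $W$ by a linear character (after enlarging scalars to a maximal subfield of $\End_{KH}(W)$), and faithfulness of $V$ forces this character to be non-trivial with core-free kernel, which also yields $\chr(K)\neq p$ (Lemma \ref{gulash}). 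These characters $\la_M$, arising from possibly different submodules $W$ of $V$ for different components $M$, are then glued purely formally: $\T(G)_p$ is the direct product of its homogeneous components, so the $\la_M$ extend to a character $\la_p$ of $\T(G)_p$ whose kernel $S_p$ has index $p$ and trivial core, and $S=\prod_p S_p$ satisfies the theorem. If you replace the search for $\Soc_A(V)\neq 0$ by this local analysis plus formal gluing, your Clifford-theoretic computations go through essentially verbatim at the level of each homogeneous component.
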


The most general contributions to this problem have been made by
Tushev. Theorems \ref{main} and \ref{mca2} extend \cite[Theorem
1]{T2}, which reqires that every minimal normal subgroup of $G$ be
finite. Under this assumption, $[G:C_G(N)]$ above is finite,
$\F(G)$ is trivial, every irreducible $\Z G$-submodule of $\T(G)$
is finite, and every non-abelian minimal normal subgroup $M$ of
$G$ is a direct power of a finite non-abelian simple group. In
particular, $M$ is not a $p$-group for any prime $p$, and it
therefore has non-trivial irreducible representations over any
field (cf. Lemma \ref{elr3}).

{\color{blue} The condition $[G:C_G(N)]<\infty$ cannot be removed
with impunity from Theorem~\ref{mca2} as the examples from
\cite{ST} show. }

When $G$ itself is finite \cite[Theorem 1]{T2} or, alternatively,
Theorems \ref{main} and \ref{mca2}, yield a full criterion
({\color{blue} no additional hypotheses are required) .}


In prior work, Tushev \cite{T1} found necessary and sufficient
conditions for a locally polycyclic, solvable group of finite
Pr$\mathrm{\ddot{u}}$fer rank to have a faithful irreducible
representation over an algebraic extension of a finite field. He
recently \cite{T3} extended this line of research to solvable
groups of finite Pr$\mathrm{\ddot{u}}$fer rank over an arbitrary
field.

Also recently, Bekka and de la Harpe \cite{Bd} found a criterion
for a countable group to have faithful irreducible unitary complex
representation, although their sense of irreducibility differs
from the purely algebraic meaning given in this paper.



\section{A review of the finite case}\label{sec:h}



The development of our problem for finite groups makes an
interesting story which is often reported with a certain degree of
inaccuracy in the literature. Moreover, all known criteria,
obviously equivalent to each other, are easily obtained from one
another by means of a straightforward argument that does not
involve groups, their representations or whether they are faithful
or not. This prompted us to review the history of this problem for
finite groups in more detail than usual and to indicate how a
direct translation between the various criteria can be carried
out.

For the sake of our historical review we adopt the following conventions: $G$ stands for an arbitrary finite group and a representation of $G$ means a complex representation, unless otherwise stated.

At the dawn of the twentieth century, a well known and established necessary condition was that the center of $G$ be cyclic.
A partial converse was shown by Fite~\cite{F} as early as 1906. He proved that if $G$ has prime power order or,
more generally, if $G$ is the direct product of such groups (that is, if $G$ is nilpotent),
then the above condition is also sufficient.

The first example of a finite centerless group that admits no
faithful irreducible representation was given by Burnside \cite[Note F]{B} in 1911. It was the semidirect product
\begin{equation}
\label{bu} (C_3\times C_3)\rtimes C_2,
\end{equation}
where $C_2$ acts on $C_3\times C_3$ without non-trivial fixed
points. (Another well-known example of a similar kind, due to
Isaacs \cite[Exercise 2.19]{I} is
\begin{equation}
\label{isa} (C_2\times C_2\times C_2\times C_2)\rtimes C_3,
\end{equation}
where, again, $C_3$ acts on $C_2\times C_2\times C_2\times C_2$
without non-trivial fixed points.)

In \cite[Note F]{B} Burnside found a sufficient condition as well. He showed that if $G$
does not contain two distinct minimal normal subgroups whose
orders are powers of the same prime, then $G$ has a faithful
irreducible representation. This condition is not necessary, a
fact recognized by Burnside. To illustrate this phenomenon, let
$$
G=V\rtimes T,
$$
where $V$ is a vector space of finite dimension $d>1$ over a
finite field $F_q$ with $q>2$ elements, prime characteristic $p$,
and $T$ is the diagonal subgroup of $\GL(V)$ with respect to some
basis of $V$. Clearly, $G$ is the direct product of $d$ copies of
$$
F_q^+\rtimes F_q^*,
$$
which admits a faithful irreducible representation of dimension $q-1$. When
$q>2$ this group has trivial center, so the corresponding tensor
power representation of $G$ is not only irreducible but also
faithful.

Observe that while $G$ has $d>1$ distinct minimal normal subgroups
of order $q$, the normal subgroup they generate, namely $V$, is
clearly a cyclic $F_p G$-module. This is precisely the condition
that Burnside missed.

Since the center of a nilpotent group intersects every non-trivial
normal subgroup non-trivially, Fite's result follows from
Burnside's. The groups $F_q^+\rtimes F_q^*$ used above show that
the converse fails. Because of his work in \cite[Note F]{B} we
will refer to the problem at hand as Burnside's problem or
Burnside's question.

The relevance of $\Soc(G)$ to his problem was already apparent to
Burnside in 1911, although the nature of $\Soc(G)$ was not
sufficiently understood at the time for him to produce a solution.
The structure of $\Soc(G)$ was elucidated by Remak \cite{R} in
1930, and his description has been implicitly or explicitly used
by every author who eventually wrote about Burnside's question.

The first paper addressing Burnside's problem was written by Shoda
\cite{S} in 1930. An error in \cite{S} was quickly pointed by
Akizuki in a private letter to Shoda. This letter included the
first correct solution to Burnside's question. Akizuki's criterion
and a sketch of his proof appeared in a second paper by Shoda
\cite{S2} in 1931, which also included an independent proof of
Akizuki's criterion by Shoda.

Let us outline Akizuki's criterion as it appears in \cite{S2}. Let
$T$ be one of the factors appearing in a decomposition of $\T(G)$
as a direct product of abelian minimal normal subgroups of $G$,
and let~$s$ be the total number of these factors isomorphic to $T$
via an isomorphism that commutes  with the inner automorphisms of
$G$. The group $T$ is isomorphic to the direct product of, say
$r$, copies of $\Z_p$ for some prime $p$. The total number of
endomorphisms of $T$ that commute with all inner automorphisms of
$G$ is of the form $p^g$ for some positive integer $g$. Akizuki's
criterion is that $G$ admits a faithful irreducible representation
if and only if $sg\leq r$, and this holds for all factors $T$
indicated above.

The reader will likely be as baffled by this criterion as subsequent
writers on this subject were. Akizuki's condition seemed difficult to verify in practice, to say the least, and alternative
criteria were sought.

A second criterion was produced by Weisner \cite{W} in 1939.
According to Weisner, $G$ has a faithful irreducible
representation if and only if for every $p\in\Pi(G)$ there exists
a maximal subgroup of $\T(G)_p$ that contains no normal subgroup
of $G$ other than the trivial subgroup.

Shortly after Weisner's paper, Tazawa \cite{T} extended Burnside's problem and
asked under what conditions $G$ would admit a faithful representation with $k$ irreducible
constituents. His answer was given along the same lines as Akizuki's criterion.

The next paper on the subject was written by Nakayama \cite{N} in 1947. He was
the first to consider Burnside's problem over fields of prime characteristic, obtaining
a full criterion which is a slight modification of Weisner's criterion as stated over $\C$.
Nakayama seems to have been unaware of Weisner's prior work. A great deal of \cite{N}
is devoted to solve several generalizations of Burnside's problem.

The following paper on Burnside's question was written Kochend$\mathrm{\ddot{o}}$rffer \cite{K} in 1948.
He gave another proof of Akizuki's criterion and, unaware of Weisner's paper, stated and
proved Weisner's criterion. He wished to produce a condition
that was easily verifiable in practice, and proved that if all
Sylow subgroups of $G$ have cyclic center then $G$ has a faithful
irreducible representation (cf. \cite[Exercise 5.25]{I}). This sufficient condition is an
immediate consequence of the one proved by Burnside in 1911. In a second part of his paper
Kochend$\mathrm{\ddot{o}}$rffer addressed and solved Burnside's question for fields of prime characteristic,
unaware of Nakayama's prior solution.


The next solution to Burnside's problem was given by
Gasch$\mathrm{\ddot{u}}$tz \cite{G} in 1954. According to
Gasch$\mathrm{\ddot{u}}$tz, $G$ has a faithful irreducible
representation if and only if $\T(G)$ is a cyclic $\Z G$-module.

With the benefit of hindsight, let us try to explain {\em
directly} why the criteria of Akizuki, Weisner and Gasch$\mathrm{\ddot{u}}$tz are equivalent to each
other. For this purpose, let us slightly translate each of the above
criteria into a more favorable language.

Since $\T(G)$ is a completely reducible $\Z G$-module, it is
readily seen that $\T(G)$ is a cyclic $\Z G$-module if and only if
$\T(G)_p$ is a cyclic $F_p G$-module for each $p\in\Pi(G)$.

Given $p\in\Pi(G)$, a maximal subgroup, say $M$, of $\T(G)_p$ is
the kernel a non-trivial linear functional $\la:\T(G)_p\to F_p$,
and the largest normal subgroup of $G$ contained in $M$ is
therefore
$$
\underset{g\in G}\bigcap gMg^{-1}=\underset{g\in G}\bigcap \ker({}^g \la),
$$
where ${}^g \la:\T(G)_p\to F_p$ is the linear character defined by
$$
({}^g \la)(v)=\la(g^{-1}vg),\quad v\in \T(G)_p.
$$
Since a proper subspace of the dual space $\T(G)_p^*$ annihilates
a non-zero subspace of $\T(G)_p$, we see that Weisner's condition
is that $\T(G)_p^*$ be a cyclic $F_pG$-module for every
$p\in\Pi(G)$.

It is easily seen (cf. \S\ref{du}) that if $K$ is a field, $A$ is a $K$-algebra with
involution, and $V$ is a finite dimensional completely
reducible $A$-module then $V$ is cyclic if and only if so is its
dual $V^*$. This shows directly that the criteria of Weisner and
Gasch$\mathrm{\ddot{u}}$tz are equivalent.

Let $T$ be an abelian minimal normal subgroup of $G$, that is, an
irreducible $F_p G$-module of $\T(G)_p$ for some $p\in\Pi(G)$. The
quantities $s$, $g$ and $r$ of Akizuki's criterion are
respectively equal to the multiplicity of $T$ in $\T(G)_p$, the
dimension of the field $F=\mathrm{End}_{F_p G}(T)$ over $F_p$, and
the dimension of $T$ itself over $F_p$. Now $T$ is an $F$-vector
space of dimension $r/g$, so Akizuki's criterion is that the
multiplicity of $T$ in $\T(G)_p$ be less than or equal
$\dm_{F}(T)$. The above translation of Akizuki's criterion was
already known to P$\mathrm{\acute{a}}$lfy \cite{P} in 1979, who
gave another proof of Akizuki's criterion, this time over a
splitting field for $G$ of characteristic not dividing $\T(G)$.
(P$\mathrm{\acute{a}}$lfy credited Kochend$\mathrm{\ddot{o}}$rffer
for this result, unaware of Nakayama's prior work.)

It is easily seen (cf. \S\ref{du}) that if $R$ is a left artinian
ring and $V$ is a completely reducible $R$-module then $V$ is
cyclic if and only if for every irreducible submodule $W$ of $V$
the multiplicity of $W$ in $V$ does not exceed $\dm_D W$, where
$D=\mathrm{End}_R W$. This shows directly that the criteria of
Akizuki, Weisner and Gasch$\mathrm{\ddot{u}}$tz are all equivalent
to each other, with generic arguments that involve no groups at
all.

Burnside's question and its various solutions have also appeared in book form. See the books by Huppert \cite{H}, Doerk and Hawkes \cite{DH},
and Berkovich and Zhmud$'$ \cite{BZ}, for instance.
Incidentally, Zhmud$'$ \cite{Zm} found alternative necessary and sufficient conditions for $G$ to admit a faithful representation
with exactly $k$ irreducible representations, a problem previously
considered by Tazawa \cite{T}.


We close this section with an example. Given a finite field
$F_q$ with $q$ elements and a vector space $V$ of finite dimension $d$ over $F_q$
we consider the subgroup
$$
G(d,q)=V\rtimes F_q^*
$$
of the affine group $V\rtimes \GL(V)$ and set
$$
G(q)=F_q^+\rtimes F_q^*.
$$

For any non-trivial linear character $\la:F_q^+\to \C^*$ the
induced character
$$
\chi=\mathrm{ind}_{F_q^+}^{\color{blue}{G(q)}}\la
$$
is readily seen to be a faithful and irreducible of degree $q-1$.
Any hyperplane $P$ of $V$ gives rise a group epimorphism
$$
G(d,q)\to G(q)
$$
with kernel $P$. This produces $(q^d-1)/(q-1)$ different
irreducible characters of $G(d,q)$ of degree $q-1$, each of them
having a hyperplane of $V$ as kernel. On the other hand, $G(d,q)$
has $q-1$ different linear characters with $V$ in the kernel.
Since
$$
\frac{q^d-1}{q-1}\times (q-1)^2+(q-1)\times 1^2=q^d(q-1)=|G(d,q)|,
$$
these are all the irreducible characters of $G(d,q)$. In
particular, $G(d,q)$ has no faithful irreducible characters if
$d>1$. On the other hand, if $q>2$ the center of $G(d,q)$ is
trivial. Thus, to each finite field $F_q$, with $q>2$, there
corresponds an infinite family of finite centerless groups, namely
$G(d,q)$ with $d>1$, having no faithful irreducible characters for
virtually tautological reasons. Moreover, every non-linear irreducible character of
$G(d,q)$ is uniquely determined by its (non-trivial) kernel.
Observe that the groups (\ref{bu}) and (\ref{isa}) are
isomorphic to $G(2,3)$ and $G(2,4)$, respectively.

\section{Cyclic modules and their duals}\label{du}

{\color{blue}
All modules appearing in this paper are assumed to
be left modules.
}

\begin{lemma}\label{dual} Let $R$ be a left artinian ring and let $V$
be a completely reducible $R$-module. Then $V$ is cyclic
if and only if for each irreducible component $W$ of $V$ the multiplicity of $W$ in $V$
is less than or equal than $\dm_D(W)$, where $D=\mathrm{End}_{R}(W)$.
\end{lemma}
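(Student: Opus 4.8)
The plan is to reduce the whole statement to the structure of the semisimple quotient ring $S=R/\Rad(R)$, where $\Rad(R)$ denotes the Jacobson radical. Since $R$ is left artinian, $S$ is a semisimple ring; every completely reducible $R$-module is annihilated by $\Rad(R)$ and is thus naturally an $S$-module, and the simple $R$-modules are exactly the simple $S$-modules. The single computational fact on which everything rests is this: if $W$ is a simple $R$-module and $D=\End_R(W)$, then the multiplicity of $W$ in the left regular module ${}_SS$ equals $\dm_D(W)$. This is a direct consequence of Wedderburn--Artin. Writing $S\cong\prod_i M_{n_i}(D_i)$ with the $D_i$ division rings, the simple modules are the column spaces $W_i=D_i^{n_i}$; each $W_i$ occurs in ${}_SS$ with multiplicity $n_i$, while $\End_R(W_i)\cong D_i^{\mathrm{op}}$ and $\dm_{D_i^{\mathrm{op}}}(W_i)=n_i$, so indeed the multiplicity equals $\dm_D(W_i)$.

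Next I would record the elementary observation that a completely reducible $R$-module $V$ is cyclic if and only if it is a quotient of $S$. On one hand, if $V$ is cyclic it is a quotient of ${}_RR$ on which $\Rad(R)$ acts trivially (because $V$ is completely reducible), hence a quotient of $S$. On the other hand, any quotient of $S$ is a quotient of $R$ via $R\twoheadrightarrow S$, and is therefore cyclic.

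With these two reductions in hand, both implications become statements about quotients of the semisimple module ${}_SS\cong\bigoplus_i W_i^{n_i}$. Write $V\cong\bigoplus_i W_i^{m_i}$, where $m_i$ is the multiplicity of $W_i$ in $V$; recall that for completely reducible modules these multiplicities are well defined and additive, and that quotient modules split off as direct summands. For the forward direction, if $V$ is a quotient of $S$ then passing to multiplicities yields $m_i\le n_i=\dm_D(W_i)$ for every $i$. For the converse, if $m_i\le\dm_D(W_i)=n_i$ for every $i$, then $W_i^{m_i}$ is a direct summand of $W_i^{n_i}$, so $V$ is a direct summand of $S$; composing the projection $S\twoheadrightarrow V$ with $R\twoheadrightarrow S$ exhibits $V$ as a quotient of $R$, that is, as a cyclic module.

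The main obstacle is the multiplicity computation of the first paragraph: it is the only genuinely non-formal ingredient, and the one point requiring care is the side on which $D=\End_R(W)$ acts, together with the resulting identification $\dm_D(W)=n_i$. Once that is correctly in place, both directions reduce to routine bookkeeping with semisimple modules.
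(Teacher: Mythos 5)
Your proof is correct and follows essentially the same route as the paper's: both pass to the semisimple quotient $R/\Rad(R)$, invoke the Wedderburn--Artin structure, and identify cyclic completely reducible modules with quotients (equivalently, direct summands) of the left regular module, where the multiplicity bound $\dm_D(W)$ appears as the matrix size $n_i$. The only cosmetic difference is that you treat all homogeneous components simultaneously via the full product decomposition, while the paper first reduces to the case of a single homogeneous component and one matrix ring $M_n(D)$; your explicit handling of the $D$ versus $D^{\mathrm{op}}$ issue is a welcome bit of extra care.
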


\begin{proof} Since there are finitely many isomorphisms types of irreducible $R$-modules, we may reduce to the case when all irreducible submodules of $V$ are isomorphic. Let $W$ be one of them.
Replacing $R$ by its image in $\mathrm{End}(V)$ we may also assume that $J(R)=0$. Thus we may restrict to the case $R=M_n(D)$
and $W=D^{n}$. A cyclic $R$-module is nothing but a quotient of $R$ by a left ideal. This is just a left ideal of $R$, which is isomorphic to the direct sum of at most $n$ copies of~$W$.

\end{proof}

\begin{cor}\label{dual2} Let $K$ be a field and let $A$ be a $K$-algebra with involution. Suppose that $V$ is a completely reducible finite dimensional $A$-module. Then $V$ is cyclic if and only if so is $V^*$.
\end{cor}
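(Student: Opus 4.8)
The plan is to deduce this from Lemma~\ref{dual} by checking that the two numerical invariants that govern cyclicity there are preserved under dualization. First I would use the involution $\sigma$ of $A$ to equip $V^{*}=\Hom_K(V,K)$ with the structure of a \emph{left} $A$-module via $(a\cdot f)(v)=f(\sigma(a)v)$: anti-multiplicativity of $\sigma$ makes this action associative, and $\sigma^{2}=\mathrm{id}$ yields a natural $A$-module isomorphism $V\cong V^{**}$, $v\mapsto(f\mapsto f(v))$. Since $V$ is finite dimensional over $K$, the images of $A$ in $\End_K(V)$ and in $\End_K(V^{*})$ are finite dimensional $K$-algebras, hence left artinian rings over which $V$ and $V^{*}$ are faithful completely reducible modules; moreover cyclicity over $A$ coincides with cyclicity over these images, so Lemma~\ref{dual} applies to both $V$ and $V^{*}$.

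Next I would record the standard facts about dualizing. Because $\Hom_K(-,K)$ is exact and contravariant, submodules of $W^{*}$ correspond to quotients of $W$; hence $W^{*}$ is irreducible whenever $W$ is, and a decomposition $V\cong\bigoplus_i W_i$ into irreducibles dualizes to $V^{*}\cong\bigoplus_i W_i^{*}$. Combining this with $W^{**}\cong W$ shows that $W_i^{*}\cong W^{*}$ if and only if $W_i\cong W$, so the multiplicity of any irreducible $W$ in $V$ equals the multiplicity of $W^{*}$ in $V^{*}$.

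It then remains to match the endomorphism division rings. I would send $\phi\in\End_A(W)$ to its transpose $\phi^{*}\colon f\mapsto f\circ\phi$; a short computation using the $A$-linearity of $\phi$ shows $\phi^{*}\in\End_A(W^{*})$ for the twisted action, and since transposition reverses composition, $\phi\mapsto\phi^{*}$ is an anti-isomorphism of $K$-algebras from $D=\End_A(W)$ onto $D'=\End_A(W^{*})$, bijectivity again coming from the double dual. Therefore $\dim_K D'=\dim_K D$ while $\dim_K W^{*}=\dim_K W$, so $\dim_{D'}(W^{*})=\dim_K(W^{*})/\dim_K(D')=\dim_K(W)/\dim_K(D)=\dim_{D}(W)$. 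Feeding these equalities into Lemma~\ref{dual}, the inequality ``multiplicity $\le\dim_{D}W$'' holds for every irreducible component of $V$ if and only if the corresponding inequality holds for every irreducible component of $V^{*}$, which yields the asserted equivalence.

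The main obstacle is the bookkeeping around the involution rather than any deep argument: one must verify that the transpose of an $A$-endomorphism is again $A$-linear for the \emph{twisted} action on $V^{*}$, and keep track of the fact that it is the order-two hypothesis on $\sigma$ (not merely its being an anti-automorphism) that makes both the identification $V\cong V^{**}$ and the isomorphism-class correspondence $W\leftrightarrow W^{*}$ valid. Once these are in place, the dimension count is routine.
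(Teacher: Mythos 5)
Your proof is correct and takes essentially the same route as the paper: both deduce the statement from Lemma~\ref{dual} by using the transpose map to identify $\End_{A}(W^*)$ with the opposite division algebra of $D=\End_{A}(W)$, and the equality $\dm_K(W)=\dm_K(W^*)$ to conclude $\dm_D(W)=\dm_{D^0}(W^*)$. The only cosmetic difference is that the paper first reduces, via the double dual, to the case where all irreducible submodules of $V$ are isomorphic, whereas you track multiplicities across the full decomposition and spell out the twisted module structure on $V^*$ and the artinian reduction, details the paper leaves implicit.
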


\begin{proof} Since every submodule of $V$ is isomorphic to its double dual,
we may reduce to the case when all irreducible submodules of $V$ are isomorphic. Let $W$ be one of them.
Then the transpose map
$$\mathrm{End}_{K}(W)\to\mathrm{End}_{K}(W^*)$$ sends
$D=\mathrm{End}_{A}(W)$ onto $D^0=\mathrm{End}_{A}(W^*)$, the
division algebra opposite to $D$. Since $\dm_K(W)=\dm_K W^*$, it
follows that $\dm_D(W)=\dm_{D^0}(W^*)$. Now apply Lemma
\ref{dual}.
\end{proof}

\begin{cor} Let $K$ be a field and let $A$ be a $K$-algebra with involution. Suppose that $V$ is a finite dimensional $A$-module.
Then $V$ is cyclic if and only if the irreducible components of $V/\Rad(V)$ satisfy the conditions of Lemma \ref{dual},
and $V^*$ is cyclic if and only if the the irreducible components of $\Soc(V)$ satisfy the conditions of Lemma \ref{dual}.
\end{cor}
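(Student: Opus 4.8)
The plan is to establish the two equivalences separately, in each case reducing to a completely reducible module to which Lemma~\ref{dual} applies directly. Since $V$ is finite dimensional, the image of $A$ in $\End_K(V)$ is a finite dimensional, hence left artinian, $K$-algebra, and it is with respect to this image that multiplicities and the endomorphism rings $D=\End_A(W)$ of irreducible components are to be computed; in particular $\Rad(V)=J(A)V$ and $J(A)$ acts nilpotently. For the first equivalence I would invoke Nakayama's lemma: if the image of $v$ generates the semisimple module $V/\Rad(V)$, then $V=Av+\Rad(V)=Av+J(A)V$ forces $V=Av$, so $V$ is cyclic if and only if its top $V/\Rad(V)$ is cyclic. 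As $V/\Rad(V)$ is completely reducible, Lemma~\ref{dual} rewrites its cyclicity as the stated multiplicity condition on its irreducible components, giving the first assertion (the involution plays no role here).

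For the second equivalence I would apply the same reasoning to the dual module: $V^*$ is cyclic if and only if $V^*/\Rad(V^*)$ is cyclic. The decisive step is the identification
\[
V^*/\Rad(V^*)\cong(\Soc V)^*.
\]
The involution turns the contravariant functor $(-)^*=\Hom_K(-,K)$ into an exact self-duality of finite dimensional $A$-modules with $(-)^{**}\cong\mathrm{id}$, carrying simple modules to simple modules. A submodule $U\subseteq V$ then corresponds to its annihilator $U^\perp=\{f\in V^*:f|_U=0\}$, with $(V/U)^*\cong U^\perp$ and $V^*/U^\perp\cong U^*$; moreover a quotient of $V^*$ is simple precisely when the corresponding submodule of $V\cong V^{**}$ is simple. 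Hence this duality carries the maximal semisimple quotient $V^*/\Rad(V^*)$ of $V^*$ onto the maximal semisimple submodule $\Soc(V)$ of $V$, giving $\Rad(V^*)=\Soc(V)^\perp$ and $V^*/\Rad(V^*)\cong(\Soc V)^*$. Granting this, Corollary~\ref{dual2}, applied to the completely reducible module $\Soc(V)$, shows that $(\Soc V)^*$ is cyclic if and only if $\Soc(V)$ is cyclic, and a final appeal to Lemma~\ref{dual} expresses the latter as the multiplicity condition on the irreducible components of $\Soc(V)$.

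I expect the socle--top duality $V^*/\Rad(V^*)\cong(\Soc V)^*$ to be the only step demanding care: one must verify that the maximal submodules of $V^*$ are exactly the annihilators $W^\perp$ of the simple submodules $W\subseteq V$, whence $\Rad(V^*)=\bigl(\sum_W W\bigr)^\perp=\Soc(V)^\perp$. Once this is in place, the remainder is a routine concatenation of Nakayama's lemma, Lemma~\ref{dual} and Corollary~\ref{dual2}.
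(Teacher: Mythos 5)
Your proposal is correct and follows essentially the same route as the paper: reduce cyclicity of $V$ to cyclicity of the semisimple top $V/\Rad(V)$, establish the duality $V^*/\Rad(V^*)\cong\Soc(V)^*$, and then pass from $\Soc(V)^*$ to $\Soc(V)$ via Corollary~\ref{dual2} and Lemma~\ref{dual}. You merely supply details the paper leaves implicit (Nakayama's lemma for the top reduction, and the annihilator correspondence $\Rad(V^*)=\Soc(V)^\perp$), and your citation of Corollary~\ref{dual2} for the step ``$\Soc(V)^*$ cyclic iff $\Soc(V)$ cyclic'' is in fact the more precise reference.
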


\begin{proof} Clearly $V$ is cyclic if and only if so is $V/\Rad(V)$. Moreover,
$$
V^*/\Rad(V^*)\cong \Soc(V)^*,
$$
and by Lemma \ref{dual} we know that $\Soc(V)^*$ is cyclic if and only if so is $\Soc(V)$.
\end{proof}

In particular,  if $V/\Rad(V)$ (resp. $\Soc(V)$) is irreducible then $V$ (resp. $V^*$) is cyclic. In the extreme
case when $V$ is uniserial the so is $V^*$ and, moreover, $V$ and $V^*$ are automatically cyclic.

\section{Modules lying over others}\label{sec:modlying}

{
\color{blue}
\begin{lemma}\label{cuqi} Let $K$ be a field. If $U$ and $V$
be $K$-vector spaces with proper subspaces $X$ and $Y$,
respectively, then the subspace $X\otimes V+U\otimes Y$ of
$U\otimes V$ is proper.
\end{lemma}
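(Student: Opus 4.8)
The plan is to separate a carefully chosen pure tensor from the sum by means of a single linear functional on $U\otimes V$ that annihilates $X\otimes V+U\otimes Y$ while taking a non-zero value on that tensor. This immediately exhibits an element of $U\otimes V$ lying outside the sum, forcing the sum to be proper.

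First, since $X$ and $Y$ are proper, I would fix vectors $u_0\in U\setminus X$ and $v_0\in V\setminus Y$. Because every subspace of a vector space admits a complement, there is a linear functional $f:U\to K$ with $f|_X=0$ and $f(u_0)=1$: indeed $X\oplus Ku_0$ is a direct sum, so one may declare $f$ to be $0$ on $X$ and $1$ on $u_0$ and then extend $f$ arbitrarily over a complement of $X\oplus Ku_0$. Symmetrically, choose $g:V\to K$ with $g|_Y=0$ and $g(v_0)=1$.

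Next, the bilinear map $U\times V\to K$ given by $(u,v)\mapsto f(u)g(v)$ factors, by the universal property of the tensor product, through a linear functional $\varphi:U\otimes V\to K$ determined by $\varphi(u\otimes v)=f(u)g(v)$. I would then verify that $\varphi$ kills each generating pure tensor of the two summands: $\varphi(x\otimes v)=f(x)g(v)=0$ for $x\in X$, $v\in V$, since $f|_X=0$; and $\varphi(u\otimes y)=f(u)g(y)=0$ for $u\in U$, $y\in Y$, since $g|_Y=0$. Hence $\varphi$ vanishes on the whole of $X\otimes V+U\otimes Y$.

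Finally, since $\varphi(u_0\otimes v_0)=f(u_0)g(v_0)=1\neq 0$, the pure tensor $u_0\otimes v_0$ cannot belong to $X\otimes V+U\otimes Y$, so the latter is a proper subspace of $U\otimes V$, as desired. There is no genuinely hard step here; the only points demanding care are foundational, namely guaranteeing the existence of the functionals $f$ and $g$ when $U$ and $V$ are infinite dimensional (which rests on the existence of complements, equivalently a basis-extension argument) and invoking the universal property so that $\varphi$ is well defined on all of $U\otimes V$ rather than merely on pure tensors.
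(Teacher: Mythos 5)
Your proof is correct and follows essentially the same route as the paper's: both construct linear functionals annihilating $X$ and $Y$, form the product functional on $U\otimes V$ via the universal property, and observe that it kills $X\otimes V+U\otimes Y$ while being non-zero. Your version merely makes explicit the choice of $u_0$ and $v_0$ witnessing non-vanishing, which the paper leaves implicit in the assertion $\gamma\neq 0$.
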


\begin{proof} There are non-zero linear functionals
$\alpha:U\to K$ and $\beta:V\to K$ such that $\alpha(X)=0$ and $\beta(Y)=0$.
Let $\gamma:U\otimes V\to K$ be the unique linear functional such that $\gamma(u\otimes v)=\alpha(u)\beta(v)$
for all $u\in U$ and $v\in V$. Then $\gamma\neq 0$ and $\gamma(X\otimes V+U\otimes Y)=0$, as required.
\end{proof}
}

\begin{lemma}\label{equis1} Let $K$ be a field, let $G_1,\dots,G_n$ be
groups and set $G=G_1\times\cdots\times G_n$. Let $J_1,\dots,J_n$
be proper left ideals of $KG_1,\dots,KG_n$, respectively. Then the
left ideal of $KG$ generated by $J_1,\dots,J_n$ is proper.
\end{lemma}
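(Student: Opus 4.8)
The plan is to exploit the canonical identification of $KG$ with the tensor product $KG_1\otimes_K\cdots\otimes_K KG_n$, under which each inclusion $KG_i\hookrightarrow KG$ becomes $a\mapsto 1\otimes\cdots\otimes a\otimes\cdots\otimes 1$ (with $a$ in the $i$-th slot). My first step is to rewrite the left ideal $I$ generated by $J_1,\dots,J_n$ in these terms. Since $J_i$ is a left ideal of $KG_i$ we have $KG_i\cdot J_i=J_i$, and multiplying the image of $J_i$ on the left by arbitrary elements of $KG=\bigotimes_j KG_j$ shows that $KG\cdot J_i=KG_1\otimes\cdots\otimes J_i\otimes\cdots\otimes KG_n$. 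Hence
$$
I=\sum_{i=1}^n KG_1\otimes\cdots\otimes J_i\otimes\cdots\otimes KG_n,
$$
and the whole problem is reduced to a purely linear statement: if $U_1,\dots,U_n$ are $K$-vector spaces and $J_i\subseteq U_i$ is a proper subspace for each $i$, then $\sum_i U_1\otimes\cdots\otimes J_i\otimes\cdots\otimes U_n$ is a proper subspace of $U_1\otimes\cdots\otimes U_n$.

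I would then prove this linear statement by induction on $n$, with Lemma \ref{cuqi} providing both the case $n=2$ and the inductive engine. The case $n=1$ is immediate. For the inductive step, set $U=KG_1\otimes\cdots\otimes KG_{n-1}$ and $V=KG_n$, and let $I'$ be the analogous sum formed from $J_1,\dots,J_{n-1}$ inside $U$. Distributing the tensor factor $V=KG_n$ over the first $n-1$ summands of $I$ and collecting terms gives the clean decomposition $I=I'\otimes V+U\otimes J_n$. By the inductive hypothesis $I'$ is a proper subspace of $U$, while $J_n$ is a proper subspace of $V$ by assumption, so Lemma \ref{cuqi} (applied with $X=I'$ and $Y=J_n$) yields that $I$ is proper, completing the induction.

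Alternatively, the linear statement admits a direct argument mirroring the proof of Lemma \ref{cuqi}: choose non-zero functionals $\alpha_i\colon U_i\to K$ vanishing on $J_i$, and let $\gamma=\alpha_1\otimes\cdots\otimes\alpha_n$ be the resulting functional on $U_1\otimes\cdots\otimes U_n$, so that $\gamma(u_1\otimes\cdots\otimes u_n)=\alpha_1(u_1)\cdots\alpha_n(u_n)$. This $\gamma$ is non-zero and annihilates every summand $U_1\otimes\cdots\otimes J_i\otimes\cdots\otimes U_n$, hence annihilates $I$ while being non-zero, so $I$ is proper.

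I expect the only real obstacle to be the bookkeeping in the first paragraph, namely correctly identifying $KG$ with the iterated tensor product and verifying that the generated left ideal is exactly the sum of the slabs $KG_1\otimes\cdots\otimes J_i\otimes\cdots\otimes KG_n$. Once that reduction is in place, all the genuine content is already supplied by Lemma \ref{cuqi} (or by the functional construction above), and nothing group-theoretic beyond the isomorphism $K(G_1\times\cdots\times G_n)\cong KG_1\otimes\cdots\otimes KG_n$ is required.
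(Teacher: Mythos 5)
Your proof is correct and follows essentially the same route as the paper: the paper's own proof is exactly "argue by induction to reduce to $n=2$, then apply Lemma \ref{cuqi} via the isomorphism $KG\cong KG_1\otimes KG_2$," and your argument is a careful fleshing-out of that one-line proof (with the identification of the generated ideal as $\sum_i KG_1\otimes\cdots\otimes J_i\otimes\cdots\otimes KG_n$ made explicit, plus an equivalent functional-based variant). No gaps; you have simply supplied the bookkeeping the paper leaves to the reader.
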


\begin{proof}{\color{blue} Arguing by induction, we are reduced to the case
$n=2$, which follows from Lemma \ref{cuqi} by means of the
isomorphism  of $K$-algebras $KG\cong KG_1\otimes KG_2$.
}
\end{proof}

Given a ring $R$, a subring $S$, an $R$-module $V$, and an
$S$-module $U$, we say that $V$ lies over $U$ if $U$ is isomorphic
to an $S$-submdoule of $V$.

\begin{lemma}\label{equis2} Let $(G_i)_{i\in I}$ be a family of groups and let
$G$ be the direct product of them. Let $K$ be a field and for each
$i\in I$ let $V_i$ be an irreducible $KG_i$-module. Then there is
an irreducible $KG$-module $V$ lying over all $V_i$.
\end{lemma}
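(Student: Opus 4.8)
The plan is to realize the desired module as a quotient $V=KG/M$ for a carefully chosen maximal left ideal $M$, arranged so that the canonical image of each $KG_i$ inside $V$ reproduces $V_i$. First I would fix for each $i$ a nonzero vector $v_i\in V_i$ and write $V_i\cong KG_i/J_i$, where $J_i=\mathrm{Ann}_{KG_i}(v_i)$; this is a maximal left ideal of $KG_i$, using only that an irreducible module is cyclic. Let $\iota_i\colon KG_i\to KG$ be the $K$-algebra embedding induced by the inclusion of $G_i$ as the $i$-th factor of $G$, and let $J$ be the left ideal of $KG$ generated by all the $\iota_i(J_i)$, $i\in I$.

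The crux is to show that $J$ is a proper left ideal of $KG$. Since every element of $J$ lies in $\sum_{i\in F}KG\,\iota_i(J_i)$ for some finite subset $F\subseteq I$, it suffices to prove $1\notin\sum_{i\in F}KG\,\iota_i(J_i)$ for each such $F$. Writing $G=G_F\times G_{F^c}$ with $G_F=\prod_{i\in F}G_i$, the group projection $\pi_F\colon G\to G_F$ induces a surjective $K$-algebra homomorphism $KG\to KG_F$ fixing $1$ and carrying $\sum_{i\in F}KG\,\iota_i(J_i)$ onto the left ideal $J_F$ of $KG_F$ generated by the $J_i$ with $i\in F$. By Lemma~\ref{equis1} the ideal $J_F$ is proper, so $1\notin J_F$, and hence $1\notin\sum_{i\in F}KG\,\iota_i(J_i)$. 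Therefore $J$ is proper, and by Zorn's lemma it is contained in a maximal left ideal $M$ of $KG$. Set $V=KG/M$, an irreducible $KG$-module.

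It then remains to verify that $V$ lies over each $V_i$. Fix $i\in I$ and consider the composite $KG_i\xrightarrow{\iota_i}KG\to KG/M=V$, which is a homomorphism of $KG_i$-modules from the left regular module $KG_i$. Its kernel $\iota_i^{-1}(M)$ is a left ideal of $KG_i$ that contains $J_i$ (because $\iota_i(J_i)\subseteq J\subseteq M$) and is proper (because $\iota_i(1)=1\notin M$); by maximality of $J_i$ it equals $J_i$. Hence the image of this composite is a $KG_i$-submodule of $V$ isomorphic to $KG_i/J_i\cong V_i$, which is exactly the assertion that $V$ lies over $V_i$.

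I expect the main difficulty to be the passage to an arbitrary index set $I$. For a finite product one could argue more concretely, since $KG\cong KG_1\otimes_K\cdots\otimes_K KG_n$ and Lemma~\ref{equis1} directly produces a proper ideal; but for an infinite direct product the group algebra $KG$ is not a tensor product of the individual $KG_i$, so no such global decomposition is available. The device that resolves this is the observation that membership in $J$ constrains only finitely many coordinates at a time, which lets the finite-product result propagate through the projections $\pi_F$, with Zorn's lemma supplying the maximal left ideal at the end.
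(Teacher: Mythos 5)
Your proof is correct and follows the same strategy as the paper's: present each $V_i$ as $KG_i/J_i$ with $J_i$ a maximal left ideal, prove that the left ideal of $KG$ generated by all the $J_i$ is proper by reducing to finitely many factors and citing Lemma~\ref{equis1}, and finish by taking a maximal left ideal containing it. (The paper compresses the endgame, arguing by contradiction from $1=x_1y_1+\cdots+x_ny_n$ and leaving the Zorn step and the identification $\iota_i^{-1}(M)=J_i$ implicit; you spell these out, which is a virtue.) The one genuine point of divergence is how the finiteness reduction is effected. The paper chooses a finite subset $L\subseteq I$ so that all the $x_j$ and the relevant $J_k$ lie in $KH$ for $H=\prod_{\ell\in L}G_\ell$; this tacitly uses that every element of $G$ involves only finitely many coordinates, i.e., that the direct product is restricted --- which is the reading relevant to the paper's applications, where the $G_i$ are normal subgroups generating $\Soc(G)$. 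Your alternative --- pushing a putative relation $1\in\sum_{i\in F}KG\,\iota_i(J_i)$ through the algebra epimorphism $KG\to KG_F$ induced by the projection $G\to G_F$, which fixes $1$ and carries that sum onto the ideal covered by Lemma~\ref{equis1} --- needs no finite-support hypothesis, so it works verbatim for the unrestricted Cartesian product as well. That added robustness is the only substantive difference; otherwise the two arguments coincide.
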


\begin{proof} For each $i\in I$ we have $V_i\cong KG_i/J_i$, where $J_i$ is a maximal left ideal of~$KG_i$. Suppose the result is false. Then
\begin{equation}
\label{unx} 1=x_1y_1+\cdots+x_n y_n,
\end{equation}
where $x_i\in KG$ and each $y_i$ is in some $J_k$. {\color{blue}
There is a finite subset $L$ of $I$ such that for
$H=\underset{\ell\in L}\Pi G_\ell$ all $J_k$ and $x_i$ above are
inside of $KH$, against Lemma \ref{equis1}.}
\end{proof}

\begin{lemma}\label{elr1}  Let $H$ be a subgroup of a group $G$ and let $K$ be
a field. Let $V$ be an irreducible $KH$-module. Then there exists
an irreducible $KG$-module lying over~$V$.
\end{lemma}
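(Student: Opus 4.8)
The plan is to realize $V$ inside a suitable quotient of the induced module. First I would form $M=KG\otimes_{KH}V$, the module induced from $H$ to $G$. Because $KG$ is free as a right $KH$-module---a set of left coset representatives of $H$ in $G$, with $1$ among them, furnishes a basis---the inclusion $KH\hookrightarrow KG$ splits as right $KH$-modules, so that $KG\otimes_{KH}V$ decomposes with $KH\otimes_{KH}V=V$ as a direct summand. Consequently the map $v\mapsto 1\otimes v$ embeds $V$ as a $KH$-submodule $V_0=1\otimes V$ of $M$, and $M$ is generated by $V_0$ as a $KG$-module. Thus $M$ is a genuine $KG$-module lying over $V$; its only defect is that it need not be irreducible.

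Next I would cut $M$ down to an irreducible quotient in which the copy of $V$ survives. Consider the collection of $KG$-submodules $N$ of $M$ with $N\cap V_0=0$. It is nonempty (it contains $0$) and closed under unions of chains, since any element of such a union already lies in a single member of the chain; hence Zorn's lemma yields a maximal such $N$. Put $W=M/N$ and let $\overline{V_0}$ denote the image of $V_0$ in $W$. Because $N\cap V_0=0$, the quotient map restricts to a $KH$-isomorphism $V_0\to\overline{V_0}$, so $\overline{V_0}\cong V$; in particular $\overline{V_0}$ is a nonzero irreducible $KH$-submodule of $W$, and $W$ is generated by $\overline{V_0}$ as a $KG$-module.

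It remains to verify that $W$ is irreducible, and this is where the maximality of $N$ does the work. Let $P'$ be a nonzero $KG$-submodule of $W$, say $P'=P/N$ with $P\supsetneq N$. If $P'\cap\overline{V_0}=0$, then pulling back to $M$ forces $P\cap V_0=0$ while $P\supsetneq N$, contradicting the maximality of $N$; hence $P'\cap\overline{V_0}\neq 0$. Since $\overline{V_0}\cong V$ is irreducible over $KH$ and $P'\cap\overline{V_0}$ is a $KH$-submodule of it, we conclude $\overline{V_0}\subseteq P'$, whence $W=KG\cdot\overline{V_0}\subseteq P'$. Thus $P'=W$, so $W$ is an irreducible $KG$-module lying over $V$.

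I expect the main delicate point to be the embedding of $V$ into $M$: one must invoke the freeness of $KG$ over $KH$ to know that $v\mapsto 1\otimes v$ is injective rather than merely a $KH$-map. Once $V_0\cong V$ sits inside $M$, the Zorn's lemma argument and the irreducibility check are formal. It is worth noting that no finiteness hypothesis on $[G:H]$, on $\dim_K V$, or on the groups themselves is required, which matters for the infinite setting of this paper.
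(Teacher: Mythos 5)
Your proof is correct, and it is a self-contained variant of what the paper does by citation. The paper's proof writes $V\cong KH/J$ for a maximal left ideal $J$ of $KH$, invokes \cite[Lemma 6.1.2]{Pa} (a proper left ideal of $KH$ generates a proper left ideal of $KG$), extends $KG\cdot J$ to a maximal left ideal $J'$ of $KG$ by Zorn's lemma, and then notes that $KH\cap J'$ is a proper left ideal of $KH$ containing $J$, hence equals $J$, so that $V\cong KH/J$ embeds into the irreducible module $KG/J'$. Your construction is the module-theoretic mirror image of this: since $KG\otimes_{KH}(KH/J)\cong KG/(KG\cdot J)$, your induced module $M$ is the same object, and your embedding $v\mapsto 1\otimes v$, justified by freeness of $KG$ as a right $KH$-module on coset representatives, is precisely the content of the cited lemma of Passman. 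The genuine difference is where the Zorn maximality is applied and which half of the conclusion is automatic: the paper takes a maximal ideal containing $KG\cdot J$, so irreducibility of the quotient is free and the survival of $V$ is what must be verified; you take a submodule $N$ maximal among those meeting $V_0$ trivially, so the survival of $V$ is free and irreducibility is what must be verified --- and your verification, using that $\overline{V_0}$ is $KH$-irreducible and generates $W$ over $KG$, is exactly right. Your route buys self-containedness (it in effect reproves Passman's lemma rather than quoting it) at the cost of length; both arguments, as you correctly emphasize, require no finiteness hypotheses on $[G:H]$ or on $\dim_K V$, which is essential for the infinite groups treated in this paper.
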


\begin{proof} As shown in \cite[Lemma 6.1.2]{Pa}, a proper left ideal of $KH$ generates a proper left ideal of $KG$.
The result is an immediate consequence of this observation.
\end{proof}

\section{Restricting scalars}\label{sec:resca}

\begin{lemma}\label{elr2} Let $B$ be a
ring with an irreducible $B$-module $V$. Suppose
there is a subring $A$ of $B$ and a family $(u_i)_{i\in I}$ of
units of $B$ such that: $B$ is the sum of all subgroups $Au_i$;
$u_iA=Au_i$ for all $i\in I$; given any $i,j\in I$ there is $k\in
I$ such that $u_iu_jA=u_k A$. Then

(a) $V$ has maximal $A$-submodule if and only if $V$ has a minimal $A$-submodule,
in which case $V$ is a completely reducible $A$-module (homogeneous if all $u_i$
commute elementwise with $A$).

(b) If $I$ is finite then $V$ is a completely reducible $A$-module of length $\leq |I|$.
\end{lemma}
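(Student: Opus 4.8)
The plan is to analyze the restriction $V|_A$ through the action of the units $u_i$ on the lattice of $A$-submodules of $V$. Since each $u_i$ is invertible with $u_iA=Au_i$, conjugation $a\mapsto u_iau_i^{-1}$ is a ring automorphism of $A$, and the bijection $w\mapsto u_iw$ of $V$ carries $A$-submodules to $A$-submodules (if $AW\subseteq W$ then $Au_iW=u_iAW\subseteq u_iW$). Thus each $u_i$ induces a lattice automorphism of the poset of $A$-submodules, permuting the minimal ones among themselves and the maximal ones among themselves. Consequently both $\Soc_A(V)$, the sum of all minimal $A$-submodules, and $\Rad_A(V)$, the intersection of all maximal $A$-submodules, are invariant under every $u_i$; since they are also $A$-submodules and $B=\sum_iAu_i$, a one-line computation ($B\cdot\Soc_A(V)=\sum_iAu_i\Soc_A(V)=\sum_iA\,\Soc_A(V)=\Soc_A(V)$, and likewise for $\Rad$) shows that both are $B$-submodules. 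As $V$ is irreducible over $B$, each of them is $0$ or $V$.

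This dichotomy does most of the bookkeeping. Indeed $V$ has a minimal $A$-submodule iff $\Soc_A(V)\neq 0$ iff $\Soc_A(V)=V$ iff $V$ is completely reducible over $A$; and $V$ has a maximal $A$-submodule iff $\Rad_A(V)\neq V$ iff $\Rad_A(V)=0$. The implication ``minimal $\Rightarrow$ maximal'' is then immediate, since a completely reducible module has zero radical. For the homogeneity refinement I would note that if the $u_i$ commute elementwise with $A$ then the above automorphisms of $A$ are trivial, so $u_iW\cong W$ as $A$-modules for each minimal submodule $W$; combined with the sum formula below this exhibits all homogeneous components of $V|_A$ as mutually isomorphic.

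It remains to treat ``maximal $\Rightarrow$ minimal'', i.e.\ to show $\Rad_A(V)=0\Rightarrow\Soc_A(V)=V$, and here I expect the real difficulty. A useful preliminary is the sum formula: for any nonzero $A$-submodule $W$ the set $\sum_iu_iW$ is a $B$-submodule, because $A\sum_iu_iW=\sum_iu_iAW=\sum_iu_iW$ and, using the closure hypothesis $u_ju_iA=u_kA$, one gets $u_j\sum_iu_iW=\sum_iu_ju_iW=\sum_iu_kW\subseteq\sum_\ell u_\ell W$; hence $\sum_iu_iW=V$. To run the dual argument I would fix a maximal $A$-submodule $M$ and consider $N=\bigcap_iu_iM$. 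The computation $u_jN=\bigcap_iu_ju_iM=\bigcap_iu_kM$ shows that $N$ is $B$-stable exactly when left multiplication by each $Au_j$ is surjective on the monoid $\mathcal C=\{Au_i\}$, that is, exactly when $\mathcal C$ is a group. The hard part will therefore be to prove that $\mathcal C$ is a group: it is plainly a cancellative monoid (cancellativity follows from $u_i^{-1}Au_i=A$), so when $\mathcal C$ is finite it is automatically a group, but in the infinite case one must squeeze this out of the facts that $1\in B=\sum_{C\in\mathcal C}C$ and that every $u_i$ is a unit. Even granting that $\mathcal C$ is a group, so that $N=\bigcap_iu_iM$ is a proper $B$-submodule and hence $0$, a second obstacle remains for infinite $I$: the resulting embedding $V\hookrightarrow\prod_iV/u_iM$ into a product of simple $A$-modules must still be upgraded to complete reducibility, and this is the most delicate point of the argument.

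For part (b) all of these obstacles evaporate. Since $V$ is irreducible over $B$ we have $V=Bv=\sum_iAu_iv$ for any $0\neq v$, so $V$ is finitely generated over $A$ when $I$ is finite and therefore possesses a maximal $A$-submodule $M$. Now $\mathcal C$ is a finite cancellative monoid, hence a group, so $N=\bigcap_iu_iM$ is a proper $B$-submodule and thus $N=0$; the induced injection $V\hookrightarrow\bigoplus_iV/u_iM$ into a finite direct sum of simple $A$-modules shows at once that $V$ is completely reducible of length at most $|I|$. Via the dictionary of the second paragraph this also settles the equivalence in (a) whenever $I$ is finite.
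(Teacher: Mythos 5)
What you do prove is correct, and it is essentially the paper's own argument: the paper also transports a minimal (resp.\ maximal) $A$-submodule around by the units, shows $\sum_i u_iW=V$ for $W$ minimal, extracts a maximal submodule by deleting a summand of the resulting direct sum, gets homogeneity the same way, and handles (b) exactly as you do (finite generation, Zorn, $\bigcap_i u_iM=0$, embedding into the finite direct sum $\bigoplus_i V/u_iM$). Your observation that $\Soc_A(V)$ and $\Rad_A(V)$ are $B$-submodules --- which uses only the normalizing property of the $u_i$, not the closure hypothesis --- is a clean improvement on the paper's bookkeeping.

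The gap is the one you flag yourself: ``maximal $\Rightarrow$ minimal'' is never proved for infinite $I$. Your first obstacle is real but secondary: $\mathcal{C}$ need not be a group ($B=K[x,x^{-1}]$, $A=K[x]$, $u_n=x^{-n}$ for $n\ge 0$ satisfies all hypotheses with $\mathcal{C}\cong(\N,+)$), but this could be repaired by enlarging the family to all products $u_{i_1}^{\pm 1}\cdots u_{i_n}^{\pm 1}$, which preserves the three hypotheses and makes the coset family a group. Your second obstacle, however, is fatal: part (a) is \emph{false} for infinite $I$, so no argument can close this gap. Let $H$ be the integral Heisenberg group generated by $x,y$ with $z=[x,y]$ central, let $N=\langle x,z\rangle\unlhd H$, and put $B=\C H$, $A=\C N$, $u_i=y^i$ for $i\in\Z$; all hypotheses hold, and here $\mathcal{C}\cong\Z$ is even a group. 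Take $V=\C[w,w^{-1}]$ with $x$ acting as multiplication by $w$ and $y$ acting by $f(w)\mapsto f(\lambda w)$, where $\lambda$ is not a root of unity; then $z$ acts as the scalar $\lambda^{-1}$, so $V$ is a $\C H$-module, and it is irreducible (using the shift, any nonzero element of a $B$-submodule can be reduced to a monomial, and monomials generate $V$). Since $A$ acts through $\C[w,w^{-1}]$ multiplying on itself, the $A$-submodules of $V$ are exactly the ideals of $\C[w,w^{-1}]$: maximal ones exist, e.g.\ $M=(w-1)$, but minimal ones do not. Note that $\bigcap_i u_iM=\bigcap_i\bigl(w-\lambda^{-i}\bigr)=0$, so the embedding $V\hookrightarrow\prod_i V/u_iM$ is there and yet $V$ has no simple $A$-submodule: the product genuinely cannot be upgraded to a direct sum. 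You should also know that the paper's own proof of this direction fails at precisely your two points: from closure it infers that $\bigcap_i u_iM$ is invariant under each $u_j$, whereas closure only yields $u_j\bigl(\bigcap_i u_iM\bigr)\supseteq\bigcap_i u_iM$ (one needs the coset translations to be onto, as in the group case), and it then describes the target of the embedding as ``the direct sum of all $V/u_iM$'' when it is the product. What is actually true --- and what your proposal, like the paper, does establish --- is part (b), part (a) for finite $I$, and the implication minimal $\Rightarrow$ completely reducible $\Rightarrow$ maximal in general; since the paper only ever applies the lemma with finite $I$ (Corollary 5.2 and the generalization of Passman's Theorem 7.2.16), its later results are unaffected.
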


\begin{proof} (a) Suppose first that $V$ has maximal $A$-submodule $M$.
Since $u_iA=Au_i$ for all $i$, it follows that every $u_iM$ is an $A$-submodule of $V$.
But each $u_i$ is a unit, so $u_iM$ is a maximal $A$-submodule of
$V$. Let $N$ be in the intersection of all $u_iM$. Since $u_iu_jA$ is equal to some $u_kA$, we see that $N$ is
invariant under all $u_i$. But $N$ is closed under addition and
$B$ is the sum of all $Au_i$, so $N$ is $B$-invariant. The irreducibility
of $V$ implies that $N=0$. Thus $V$ is isomorphic, as $A$-module,
to a submodule of a completely reducible $A$-module, namely the
direct sum of all $V/u_iM$. In particular, $V$ has an irreducible $A$-submodule.

Suppose next that $V$ has an irreducible $A$-submodule $W$. Then every $u_iW$ is also $A$-irreducible
and $V$ is the sum of all of them. Thus $V$ is the direct sum of sum of them. Removing one summand produces
a maximal $A$-submodule.

(b) In this case $V$ is a finitely generated $A$-module, so it has a maximal $A$-submodule by Zorn's Lemma.
By above, $V$ is an $A$-submodule of a completely reducible $A$-module of length $\leq |I|$.
\end{proof}

\begin{cor}\label{xr2} Let $G$ be a group and let $L/K$ be a finite radical field
extension. Let $V$ be an
irreducible $LG$-module. Then $V$ is a completely reducible
homogeneous $KG$-module. In particular, $V$ has
an irreducible $KG$-submodule.
\end{cor}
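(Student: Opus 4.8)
The plan is to deduce the corollary from Lemma \ref{elr2} applied to the pair $A=KG\subseteq B=LG$. The natural family of units to feed into that lemma is a $K$-basis of $L$ built from radicals, regarded as central scalars inside $LG$. Because these scalars commute elementwise with every element of $KG$, once the hypotheses of Lemma \ref{elr2} are met I will be in the homogeneous, finite-length situation, which is precisely the conclusion sought.

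First I would treat the simple (pure) radical case $L=K(\alpha)$ with $\alpha^n=a\in K^*$. Take $u_i=\alpha^i$ for $0\leq i<n$. These are units of $LG$ lying in its center (they are scalars), so $u_i(KG)=(KG)u_i$ is automatic and all $u_i$ commute elementwise with $A=KG$. Since $1,\alpha,\dots,\alpha^{n-1}$ is a $K$-basis of $L$, we have $LG=\sum_i (KG)\alpha^i$. Finally $\alpha^i\alpha^j=\alpha^{i+j}$, and reducing the exponent modulo $n$ only introduces a power of the scalar $a\in K^*$, which is a unit of $KG$; hence $\alpha^i\alpha^j(KG)=\alpha^k(KG)$ for the appropriate $k$. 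With the index set $\{0,\dots,n-1\}$ finite and all $u_i$ central, parts (a) and (b) of Lemma \ref{elr2} show that the irreducible $LG$-module $V$ is a completely reducible homogeneous $KG$-module of length at most $[L:K]$.

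To pass from the simple case to an arbitrary finite radical extension, I would induct on $[L:K]$, refining the radical tower so that there is an intermediate field $K\subsetneq K_1\subseteq L$ with $K_1=K(\alpha)$, $\alpha^p\in K$ for a prime $p$, and $L/K_1$ again radical but of strictly smaller degree over $K_1$. By the inductive hypothesis $V$ is completely reducible and homogeneous over $K_1 G$, say $V\cong W^{(\kappa)}$ for a single irreducible $K_1G$-module $W$; applying the simple case to $K_1/K$ writes $W$ as a homogeneous direct sum of copies of one irreducible $KG$-module $W'$. Then $V\cong (W')^{(\kappa')}$ is completely reducible and homogeneous over $KG$, and being nonzero it contains an irreducible $KG$-submodule.

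I expect the main obstacle to be the third hypothesis of Lemma \ref{elr2}, namely $u_iu_jA=u_kA$. It holds cleanly for a pure radical step precisely because the carry $\alpha^n=a$ is a scalar in $K^*$, hence a unit of $KG$; for a general tower the analogous carry would land in a proper intermediate field and fail to be a unit of $KG$, which is exactly why the reduction to prime pure steps, rather than a single monomial basis of $L$, is needed. A secondary point to verify is that homogeneity is genuinely preserved along the induction, which is guaranteed because at each step the module is a direct sum of copies of a single irreducible.
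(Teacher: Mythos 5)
Your proof is correct and takes essentially the same route as the paper's: induct on the radical tower to reduce to a single pure-radical step $L=K(\alpha)$ with $\alpha^n\in K$, then apply Lemma \ref{elr2} with $A=KG$, $B=LG$ and the central units $u_i=\alpha^i$. The paper's proof is a one-line version of exactly this; your verification that $u_iu_jA=u_kA$ (because the carry $\alpha^n\in K^*$ is a unit of $KG$) and that homogeneity persists through the induction just fills in the details it leaves implicit.
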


\begin{proof} Arguing by induction, we may assume that $L=K[x]$, where $x^n\in K$ for some $n$,
so Lemma \ref{elr2} applies with $B=LG$, $A=KG$ and $u_i=x^i$ for $1\leq i\leq n$.
\end{proof}

\begin{lemma}\label{elr3} Let $G$ be a non-trivial group and let $K$ be a field.
If $K$ has prime characteristic $p$ we assume that $G$ is not a
$p$-group. Then $G$ has a non-trivial irreducible representation
over $K$.
\end{lemma}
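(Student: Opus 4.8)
The plan is to produce an irreducible $KG$-module on which $G$ acts non-trivially, and to manufacture it from a single well-chosen cyclic subgroup together with Lemma \ref{elr1}. First I would record the following reduction. Suppose some subgroup $H\le G$ admits a non-trivial irreducible representation, i.e.\ an irreducible $KH$-module $V$ on which $H$ does not act trivially. By Lemma \ref{elr1} there is an irreducible $KG$-module $W$ lying over $V$, meaning that $V$ is isomorphic to a $KH$-submodule of $W$. Since $H$ acts non-trivially on this copy of $V$ inside $W$, it acts non-trivially on $W$, and hence so does $G$. Thus $W$ is a non-trivial irreducible representation of $G$, and it suffices to exhibit one subgroup $H$ of $G$ possessing a non-trivial irreducible representation over $K$.

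Next I would take $H$ cyclic, generated by a suitable non-trivial element $g$, and pass to a finite cyclic quotient. This is where the hypotheses enter. If $\chr(K)=0$, I pick any $g\neq 1$ and let $\ell$ be a prime dividing the order of $g$ when that order is finite, or any prime when $g$ has infinite order. If $\chr(K)=p$, the assumption that $G$ is not a $p$-group supplies a $g$ whose order is not a power of $p$; I then let $\ell$ be a prime divisor of the order of $g$ distinct from $p$ (available since that order is not a power of $p$) when $g$ has finite order, and any prime $\ell\neq p$ when $g$ has infinite order. In every case this yields a prime $\ell\neq\chr(K)$ for which $H=\langle g\rangle$ surjects onto a cyclic group $C_\ell$ of order $\ell$.

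It then remains to show that $C_\ell$ has a non-trivial irreducible representation over $K$ and to transport it back to $H$. Since $\ell\neq\chr(K)$, the polynomial $t^\ell-1$ is separable over $K$, and in the factorization $t^\ell-1=(t-1)(t^{\ell-1}+\cdots+1)$ the two factors are coprime because the second one takes the value $\ell\neq 0$ at $t=1$. An irreducible factor $f\neq t-1$ of the second factor (which is non-constant as $\ell\geq 2$) then gives a simple $KC_\ell$-module $K[t]/(f)$ on which a generator acts as a root of unity other than $1$, hence non-trivially. Pulling this module back along the surjection $H\twoheadrightarrow C_\ell$ produces a non-trivial irreducible $KH$-module, and the reduction of the first paragraph completes the argument.

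The single point requiring care, and the one place where the standing hypothesis is indispensable, is the selection of $g$ and $\ell$: I expect the main (and only modest) obstacle to be organizing the case distinction so that a prime $\ell$ coprime to $\chr(K)$ is always available. It is precisely the unavailability of such an $\ell$ in characteristic $p$ for a $p$-group that would break the separability step above, which is why $G$ must be assumed not to be a $p$-group in that case; everything else is routine given Lemma \ref{elr1}.
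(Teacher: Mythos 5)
Your proof is correct. It shares its skeleton with the paper's argument---both locate an element $g\in G$ of infinite order or whose finite order has a prime divisor $\ell\neq\chr(K)$, obtain a non-trivial irreducible module for the cyclic subgroup $\langle g\rangle$, and then invoke Lemma \ref{elr1} to produce an irreducible $KG$-module lying over it---but the two proofs differ in where the field theory happens. The paper takes a non-trivial \emph{linear} character $\lambda\colon\langle x\rangle\to K[\zeta]$, with $\zeta$ a root of unity, applies Lemma \ref{elr1} over the extension field $K[\zeta]$ to get an irreducible $K[\zeta]G$-module $V$ lying over $\lambda$, and then descends scalars using Corollary \ref{xr2} (restriction along the finite radical extension $K[\zeta]/K$) to extract an irreducible $KG$-submodule $U$ of $V$, which is non-trivial because $V=K[\zeta]U$. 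You never leave the base field: your descent happens at the level of the cyclic group, where it amounts to choosing an irreducible factor $f\neq t-1$ of $t^\ell-1$ and taking $K[t]/(f)$ as a simple module with non-trivial generator action (your coprimality check, that $t^{\ell-1}+\cdots+t+1$ takes the value $\ell\neq 0$ in $K$ at $t=1$, is exactly the right point, and is where $\ell\neq\chr(K)$ is used). Your route is therefore more elementary and self-contained, needing none of the restriction-of-scalars machinery of Lemma \ref{elr2} and Corollary \ref{xr2}; it also only needs non-triviality to pass \emph{upward} along ``lying over,'' which is immediate, whereas the paper additionally needs it to pass \emph{downward} along restriction of scalars (the role of $V=K[\zeta]U$). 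What the paper's route buys is brevity given its context: the machinery of Section \ref{sec:resca} is developed anyway and reused in the same pattern elsewhere (e.g.\ in Lemma \ref{tor} and the McLain example), so the three-line proof there comes essentially for free.
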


\begin{proof} Our hypothesis ensures the existence of $x\in G$ and a non-trivial
linear character $\la:\langle x\rangle\to K[\zeta]$, where $\zeta$ is
a root of unity. By Lemma \ref{elr1} there is an irreducible $G$-module $V$ over $K[\zeta]$
lying over $\la$. By Corollary \ref{xr2} there is an irreducible $KG$-submodule $U$ of $V$.
Since $V=K[\zeta]U$ is not trivial, neither is $U$.
\end{proof}

\begin{note}{\rm Lemma \ref{elr2} is a generalization of \cite[Theorem
7.2.16]{Pa}, which states that an irreducible $KG$-module $V$ has
an irreducible $KN$-submodule, for a normal $N$ subgroup of $G$,
provided $[G:N]$ is finite. Our statement of Lemma \ref{elr2}
was designed to accommodate Corollary \ref{xr2} as well.

Observe that Corollary \ref{xr2} fails if $L/K$ is an arbitrary
field extension, even if $\dm_L(V)=1$. Indeed, let $K$ be an
arbitrary field, $L=K((t))$, $G=U(K[[t]])$ and $V=L$. Then $V$ is
an irreducible $LG$-module of dimension one. However, when viewed
as a $KG$-module, the submodules of $V$ form the following doubly
descending/ascending infinite chain, where $R=K[[t]]$:
$$
V\supset\cdots\supset Rt^{-2}\supset Rt^{-1}\supset R\supset
Rt\supset Rt^2\supset\cdots \supset 0.
$$
}
\end{note}

Given a group $G$, a representation of a subgroup of $G$ is said to be
$G$-faithful if its kernel contains no non-trivial normal subgroups of $G$.

\begin{lemma}\label{tor} Let $G$ be a group and let $K$ be a field.
Let $p$ be a prime different from $\chr(K)$. Suppose $N$ is a
normal subgroup of $G$ contained in $\T(G)_p$ with a
maximal subgroup containing no non-trivial normal subgroup of $G$.
Then $N$ has a $G$-faithful irreducible representation over $K$.
\end{lemma}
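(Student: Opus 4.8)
The plan is to realize the given maximal subgroup as the kernel of a linear character of $N$ valued in a cyclotomic extension of $K$, and then descend to $K$ by means of Corollary \ref{xr2}.

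First I would record the structure of $N$. Being a subgroup of $\T(G)_p$, the group $N$ is an elementary abelian $p$-group, i.e. a vector space over $F_p$, on which $G$ acts $F_p$-linearly by conjugation. By hypothesis $N$ possesses a maximal subgroup $M$ containing no non-trivial normal subgroup of $G$, that is, with $\core_G(M)=1$. As a maximal $F_p$-subspace, $M$ is a hyperplane, hence the kernel of a surjective $F_p$-linear functional $\lambda:N\to F_p$; in particular $N\neq 1$. Since $p\neq\chr(K)$, I would adjoin a primitive $p$-th root of unity $\zeta$ to $K$, forming $L=K[\zeta]$. Because $\zeta^p=1\in K$, the extension $L/K$ is a finite radical extension (trivial if $\zeta$ already lies in $K$), so Corollary \ref{xr2} is applicable to it.

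Next I would define the linear character $\chi:N\to L^{*}$ by $\chi(v)=\zeta^{\lambda(v)}$. This is a homomorphism because $\lambda$ is additive and $\zeta^p=1$, and since $\zeta$ has order exactly $p$ its kernel is precisely $M$. Let $V$ be the one-dimensional $LN$-module affording $\chi$; it is irreducible. Applying Corollary \ref{xr2} with the group $N$ and the radical extension $L/K$, I obtain an irreducible $KN$-submodule $U$ of $V$. It then remains to compute the kernel of $N$ acting on $U$: the decisive point is that $N$ acts on the one-dimensional $L$-space $V$ purely through the scalars $\chi(n)$, so for any non-zero $u\in U$ and any $n\in N$ the equality $n\cdot u=u$ forces $\chi(n)=1$, i.e. $n\in M$. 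Conversely every element of $M$ acts trivially, whence the kernel of the representation of $N$ on $U$ equals $M$. As $M$ contains no non-trivial normal subgroup of $G$, the module $U$ is a $G$-faithful irreducible representation of $N$ over $K$.

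I expect the only genuinely delicate step to be the passage from $L$ back to $K$, where one must ensure simultaneously that irreducibility survives and that the kernel is not enlarged. Corollary \ref{xr2} delivers the former, while the fact that $N$ acts by scalars on the one-dimensional $L$-line $V$ renders the kernel computation transparent, since every non-zero $K$-subspace of $V$ shares the $N$-kernel of $V$ itself.
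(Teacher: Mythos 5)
Your proof is correct and follows essentially the same route as the paper: both realize the maximal subgroup as the kernel of a linear character $N\to K[\zeta]^*$ with $\zeta^p=1$, obtain a one-dimensional $G$-faithful module over $K[\zeta]$, and descend to an irreducible $KN$-submodule via Corollary \ref{xr2}, using the scalar action of $N$ to see that faithfulness (relative to $G$) is preserved. The only difference is that you spell out the construction of the character and the kernel computation, which the paper leaves implicit.
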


\begin{proof} By assumption there exists a one-dimensional
$G$-faithful module $V$ of $N$ over a $K[\zeta]$, where
$\zeta^p=1$. {\color{blue} By Corollary \ref{xr2}}, there is an
irreducible $KN$-submodule $W$ of $V$. As $K[\zeta] W=V$, it is
clear that $W$ is $G$-faithful.
\end{proof}

\section{The torsion-free abelian part of $\Soc(G)$}\label{sec:torfree}

{\color{blue} Recall that a torsion-free abelian
characteristically simple group is divisible and hence a vector
space over $\Q$.}

\begin{lemma}\label{pinf} Let $A$ be a non-trivial torsion-free divisible abelian group, let $K$ be a field
and let $p$ be a prime different from $\chr(K)$. Then there exists
an irreducible representation $R:A\to\GL(V)$ over $K$ such that
$R(A)\cong \Z_{p^\infty}$.
\end{lemma}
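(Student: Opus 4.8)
The plan is to factor the desired representation through a surjection onto the Pr\"ufer group. Since $A$ is torsion-free and divisible, it is a nonzero vector space over $\Q$, so it admits a $\Q$-linear surjection onto $\Q$ (project onto one coordinate of a basis); composing with the quotient map $\Q\to\Q/\Z$ followed by the projection of $\Q/\Z\cong\bigoplus_q\Z_{q^\infty}$ onto its $p$-primary component yields a surjective homomorphism $\phi:A\to\Z_{p^\infty}$. It therefore suffices to produce a faithful irreducible representation $\rho:\Z_{p^\infty}\to\GL(V)$ over $K$; then $R=\rho\circ\phi$ is irreducible as a $KA$-module, since $A$ acts through $\phi$ and hence the $KA$- and $K\Z_{p^\infty}$-submodules of $V$ coincide, while $R(A)=\rho(\Z_{p^\infty})\cong\Z_{p^\infty}$ because $\rho$ is faithful.

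For the representation of $\Z_{p^\infty}$ I would use a field of roots of unity. Because $p\neq\chr(K)$, the polynomial $x^{p^n}-1$ is separable over $K$, so its roots form a cyclic group $\mu_{p^n}\cong\Z/p^n$, and the union $\mu_{p^\infty}=\bigcup_n\mu_{p^n}$ of all $p$-power roots of unity in an algebraic closure of $K$ is isomorphic to $\Z_{p^\infty}$. Set $E=K(\mu_{p^\infty})$ and take $V=E$, letting $\Z_{p^\infty}$ act through a fixed isomorphism $\Z_{p^\infty}\xrightarrow{\sim}\mu_{p^\infty}\subseteq E^{*}$ followed by multiplication in $E$. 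This action is faithful by construction, so $\rho(\Z_{p^\infty})=\mu_{p^\infty}\cong\Z_{p^\infty}$.

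The key point is irreducibility. The image of $K\Z_{p^\infty}$ in $\End_K(V)$ is precisely the $K$-subalgebra of $E$ generated by $\mu_{p^\infty}$, acting by multiplication. For each $n$ the ring $K[\mu_{p^n}]=K(\mu_{p^n})$ is a finite extension of $K$, hence a field; taking the union over $n$ shows $K[\mu_{p^\infty}]=K(\mu_{p^\infty})=E$. Consequently every $K\Z_{p^\infty}$-submodule of $V=E$ is an $E$-subspace of the one-dimensional $E$-space $E$, so it is $0$ or $E$, and $V$ is irreducible. The only step requiring care is this passage, and it rests entirely on the fact that each $K[\mu_{p^n}]$ is already a field, which in turn uses the hypothesis $p\neq\chr(K)$ to guarantee a nontrivial supply of $p$-power roots of unity; in characteristic $p$ the group $\mu_{p^\infty}$ would collapse to the trivial group and no faithful representation could exist.
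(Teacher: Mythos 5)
Your proof is correct and takes essentially the same route as the paper: both pass to a quotient $A\to\Z_{p^\infty}$ and realize $\Z_{p^\infty}$ as the group of all $p$-power roots of unity acting by multiplication on the $K$-vector space $L=K(\mu_{p^\infty})$, with irreducibility coming from the fact that the $K$-algebra generated by these roots of unity is the whole field $L$. The only difference is presentational: you spell out why the surjection $A\to\Z_{p^\infty}$ exists and why $K[\mu_{p^\infty}]$ is already a field, points the paper asserts more tersely.
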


\begin{proof} There is a subgroup $S$ of $A$ such that $B=A/S\cong \Z_{p^\infty}$. Let $L$ be a splitting field over $K$
for the family of polynomials $t^{p^i}-1$ for all $i\geq 1$. Then
$G=L^*$ acts $K$-linearly on the $K$-vector space $V=L$ by
multiplication. We have an injective linear character $\la: B\to G$ whose image $\la(B)=C$
consists of all roots of $t^{p^i}-1$, $i\geq 1$, in $L$. Since $L/K$ is algebraic, $K[C]=K(C)=L$.
Thus, the representation $A\to B\to G\to\GL(V)$ satisfies our requirements.
\end{proof}

\begin{lemma}\label{pinf2} Let $G$ be a group and let $K$ be a field. Suppose
$A=\underset{i\in I}\Pi A_i$, where each $A_i$ is an irreducible
$\Z G$-submodule of $\F(G)$ and $|I|\leq \aleph_0$. Then $A$ has a
$G$-faithful irreducible module over $K$.
\end{lemma}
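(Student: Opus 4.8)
The plan is to manufacture a single linear character of $A$ whose image is an internal direct sum of Pr\"ufer groups attached to \emph{distinct} primes, and then to read off both irreducibility and $G$-faithfulness from the multiplicative independence of those primes. First I would record that each $A_i$, being a minimal normal subgroup of $G$ that is torsion-free abelian, is characteristically simple and hence (by the remark opening this section) a non-trivial torsion-free divisible abelian group. Since $|I|\le\aleph_0$, I may choose an injection $i\mapsto p_i$ from $I$ into the set of primes different from $\chr(K)$. Let $L$ be a splitting field over $K$ for the family $\{t^n-1:n\ge 1\}$, and for each $i$ let $C_i\subseteq L^*$ be the group of all $p_i$-power roots of unity, so that $C_i\cong\Z_{p_i^\infty}$. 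Applying Lemma \ref{pinf} with the prime $p_i$ produces a character $\la_i:A_i\to L^*$ with image $C_i$; equivalently $A_i/\ker\la_i\cong\Z_{p_i^\infty}$.

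Next I would assemble these into one character. Because $A=\prod_{i\in I}A_i$ is an internal direct product, each $a\in A$ has only finitely many non-trivial components, so $\la(a)=\prod_{i\in I}\la_i(a_i)$ defines a homomorphism $\la:A\to L^*$. Put $L'=K[\la(A)]$; as $\la(A)$ consists of roots of unity this is a subfield of $L$, and I take $V=L'$ regarded as a $KA$-module, with $a\in A$ acting by multiplication by $\la(a)$. To see $V$ is irreducible, note that any $KA$-submodule $W$ is a $K$-subspace stable under multiplication by every element of $\la(A)$, hence under multiplication by the $K$-span $K[\la(A)]=L'$; thus $W$ is an $L'$-subspace of the one-dimensional $L'$-space $L'$, so $W=0$ or $W=V$.

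The heart of the argument is $G$-faithfulness, and this is exactly where the distinct primes are essential. Since the $p_i$ are pairwise distinct, the $C_i$ are the distinct primary components of a group of roots of unity, so $\la(A)=\langle C_i:i\in I\rangle=\bigoplus_{i\in I}C_i$ internally. Consequently the $i$-th primary component of $\la(a)$ is precisely $\la_i(a_i)$, and $\la(a)=1$ forces $\la_i(a_i)=1$ for every $i$; that is, $\ker\la=\bigoplus_{i\in I}\ker\la_i$. Now suppose, for contradiction, that $\ker\la$ contains a non-trivial normal subgroup $N$ of $G$. Then $N$ is a non-zero $\Z G$-submodule of the completely reducible module $A$, so it contains an irreducible $\Z G$-submodule $B$. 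For each $j$ the projection $\pi_j:A\to A_j$ is a $\Z G$-homomorphism, so $\pi_j(B)$ is a submodule of the irreducible module $A_j$, hence $0$ or $A_j$. But $B\subseteq\ker\la$ gives $\pi_j(B)\subseteq\ker\la_j$, while $A_j\not\subseteq\ker\la_j$ because $\la_j(A_j)=C_j\ne 1$; therefore $\pi_j(B)=0$ for every $j$, whence $B=0$, a contradiction. Hence no non-trivial normal subgroup of $G$ lies in $\ker\la$, i.e.\ in the kernel of $A\to\GL(V)$, so $V$ is $G$-faithful.

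I expect the main obstacle to be this last point in the presence of possibly $\Z G$-isomorphic factors $A_i$: were two factors isomorphic, a diagonal irreducible submodule could be swallowed by the kernel through multiplicative cancellation of the two characters. Choosing the $p_i$ pairwise distinct is exactly what rules this out, forcing $\ker\la$ to split as $\bigoplus_i\ker\la_i$ and thereby reducing faithfulness to the coordinatewise statement that no $A_j$ sits inside $\ker\la_j$. As an alternative route to the irreducible module one could instead invoke Lemma \ref{equis2} for the groups $A_i$ together with the modules supplied by Lemma \ref{pinf}, but the explicit character $\la$ makes the kernel computation, and hence the faithfulness, transparent.
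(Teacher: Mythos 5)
Your proof is correct, but it takes a genuinely different route from the paper's. The paper applies Lemma \ref{pinf} to each factor to get irreducible $KA_i$-modules $V_i$ on which $A_i$ acts with image $\Z_{p_i^\infty}$, then glues them via Lemma \ref{equis2} into an irreducible $KA$-module $V$ lying over all the $V_i$; faithfulness is then an element-order argument: since $V$ restricted to $A_i$ is a direct sum of translates of $V_i$, the image of $A_i$ in $\GL(V)$ is still a $p_i$-group, and for $1\neq x\in N$ one uses $\Z G$-irreducibility of $A_{i_1}$ to find $r\in\Z G$ with $r\cdot x\in N$ whose first component acts non-trivially; a product of commuting operators of pairwise coprime orders, one non-trivial, cannot be the identity. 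You instead build the module explicitly as the subfield $K[\la(A)]$ of a splitting field, acted on by multiplication through a single character $\la$ — which is exactly the technique the paper reserves for Lemma \ref{pinf3} — and you obtain faithfulness from the primary decomposition $\ker\la=\bigoplus_i\ker\la_i$ together with a projection argument using complete reducibility of $A$. Both proofs hinge on the distinctness of the primes $p_i$, which you correctly identify as what defeats diagonal submodules arising from $\Z G$-isomorphic factors. Your route buys independence from Lemma \ref{equis2} and the tensor-product machinery of Section \ref{sec:modlying}, and it makes the kernel of the representation completely explicit; the paper's route reuses the gluing lemma it needs elsewhere anyway and only has to act on a single component. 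Two minor quibbles: Lemma \ref{pinf} as stated yields a representation over $K$, not a character into $L^*$, so you are really re-running its proof rather than invoking it; and your passage to an irreducible submodule $B$ of $N$ is unnecessary, since $\pi_j(N)$ is already a $\Z G$-submodule of the irreducible module $A_j$ contained in the proper subgroup $\ker\la_j$, hence trivial for every $j$, forcing $N=1$ directly.
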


\begin{proof} There is an injection $i\mapsto p_i$, where each $p_i$ is a prime different from $\chr(K)$. Lemma \ref{pinf} ensures the existence,
for each $i\in I$, of an irreducible representation
$R_i:A_i\to\GL(V_i)$ over $K$ such that $R(A_i)\cong
\Z_{p_i^\infty}$. By Lemma \ref{equis2}, there is an irreducible
representation $R:A\to\GL(V)$ lying over all $R_i$. For a fixed
$i\in I$, we have $V=\underset{a\in A}\sum aV_i=\underset{a\in
J}\oplus aV_i$ for some subset $J$ of $A$, so $R(A_i)\cong
\Z_{p_i^\infty}$.

Suppose, if possible, that $A$ contains a non-trivial normal $N$
subgroup of $G$ which acts trivially on ${\color{blue} V}$, and
let $1\neq x\in N$. Then $x=x_{i_1}\cdots x_{i_n}$, where $1\neq
x_{i_k}\in A_{i_k}$. Since $A_{i_1}$ is an irreducible $\Z
G$-module, there is $r\in \Z G$ such that $r\cdot x_{i_1}$ does
not act trivially on $V_{i_1}$. Let $y_{i_k}=r\cdot x_{i_k}\in
A_{i_k}$ for $1\leq k\leq n$. Then $r\cdot x=y_{i_1}\cdots
y_{i_n}\in N$ and $T(y_{i_1})\cdots T(y_{i_n})=1_V$, which is
impossible since these factors commute pairwise, the order of
$T(y_{i_1})$ is a positive power of $p_{i_1}$, and the order of
every $T(y_{i_k})$, $k>1$, is a power of $p_{i_k}$.
\end{proof}

\begin{lemma}\label{pinf3} Let $G$ be a group and let $K$ be a field. Suppose
$A=\underset{i\in I}\Pi A_i$, where each $A_i$ is an irreducible
$\Z G$-submodule of $\F(G)$ and $|I|$ is infinite. Then $A$ has a
$G$-faithful irreducible module over $K$.
\end{lemma}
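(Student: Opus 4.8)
The plan is to reduce the uncountable case to the countable case already established in Lemma~\ref{pinf2}, since that lemma did all the genuine representation-theoretic work. The essential difficulty in passing to infinite $|I|$ is that I can no longer embed $I$ into a set of distinct primes different from $\chr(K)$, so the trick used in Lemma~\ref{pinf2}—assigning one prime $p_i$ to each index so that an order argument separates the factors—is unavailable. The key observation that rescues the situation is that any element $x\in A$ lies in a finite subproduct $A_{i_1}\times\cdots\times A_{i_n}$, so \emph{faithfulness} of a representation of $A$, and the condition that a normal subgroup act trivially, are both witnessed on finite, hence countable, subproducts.

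First I would partition $I$ into countably many (in fact, finitely many would not suffice in general, so I use a partition into countable pieces, or simply group the factors) blocks. Concretely, write $I=\bigsqcup_{j\in J}I_j$ where each $I_j$ is countable; set $B_j=\prod_{i\in I_j}A_i$. By Lemma~\ref{pinf2} each $B_j$ admits a $G$-faithful irreducible module $W_j$ over $K$. Then by Lemma~\ref{equis2}, applied to the family $(B_j)_{j\in J}$ of groups with their irreducible $KB_j$-modules $W_j$, there is an irreducible $KA$-module $V$ lying over every $W_j$, since $A=\prod_{j\in J}B_j$ (regrouping a direct product by blocks gives back the same product, and each $B_j$ is a $G$-invariant subproduct of $\F(G)$). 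This $V$ is the candidate $G$-faithful irreducible module.

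It remains to verify $G$-faithfulness of $V$. Suppose a non-trivial normal subgroup $N$ of $G$ lies in $A$ and acts trivially on $V$, and pick $1\neq x\in N$. Then $x$ has support in finitely many factors $A_i$, hence $x\in B_{j_1}\cdots B_{j_m}$ for finitely many blocks. As in Lemma~\ref{pinf2}, irreducibility of one of the involved $\Z G$-modules $A_i$ gives $r\in\Z G$ with $r\cdot x$ still non-trivial and with nonzero component in that factor; moreover $r\cdot x\in N$ since $N$ is normal. The component of $r\cdot x$ in the relevant block $B_{j}$ is a non-trivial element of $B_{j}$ that must act trivially on $W_{j}$ (because $V$ lies over $W_{j}$ and $N\cdot V=0$), contradicting the $G$-faithfulness of $W_{j}$ established by Lemma~\ref{pinf2}.

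The main obstacle I anticipate is bookkeeping in this last step: making precise the claim that triviality of the action of $r\cdot x$ on $V$ forces triviality of its $B_j$-component on the submodule $W_j$. This requires knowing how the $KB_j$-action on $W_j\subseteq V$ relates to the full $KA$-action, i.e. that distinct blocks act through commuting subalgebras and that an element supported on block $j$ acts on the copy of $W_j$ exactly as it does intrinsically. Since $A$ is abelian and $V$ lies over $W_j$ as a $B_j$-submodule, this compatibility should follow from the construction in Lemma~\ref{equis2}, but it is the point demanding care rather than a routine order-of-element argument as in the countable case.
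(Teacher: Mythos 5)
Your reduction breaks at exactly the step you flagged, and the problem is not bookkeeping: the inference is false. Write $r\cdot x=y_1\cdots y_m$ with $y_k\in B_{j_k}$. Since $A$ is abelian, an element of a block $B_{j'}$ with $j'\neq j_1$ acts on $V$ as a $KB_{j_1}$-endomorphism, but it need \emph{not} stabilize the embedded copy $W'$ of $W_{j_1}$; it maps $W'$ isomorphically onto some other $KB_{j_1}$-submodule of $V$. So from ``$y_1\cdots y_m$ acts as the identity on $V$'' you may only conclude that $y_1$ acts on $W'$ the way $(y_2\cdots y_m)^{-1}$ does, not that $y_1$ fixes $W'$ pointwise. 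In Lemma \ref{pinf2} this very difficulty is overcome by the coprime-order argument, which needs \emph{all} factors, globally, to carry distinct primes; once $|I|$ is uncountable, distinct blocks must reuse primes, so blocking destroys precisely the mechanism that made the countable case work.

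Worse, no repair is possible using only the two facts your argument invokes (each $W_j$ is $G$-faithful, and $V$ lies over every $W_j$), because these facts genuinely do not imply $G$-faithfulness of $V$. Suppose $A_1$ (in block $1$) and $A_2$ (in block $2$) are isomorphic as $\Z G$-modules via $\phi$ and both receive the same prime $p$; this configuration is unavoidable when, say, uncountably many of the $A_i$ are pairwise isomorphic, since blocks are countable and there are only countably many primes. Take as the block modules the fields $W_j=K[\al_j(B_j)]$, where $\al_j$ is the product of the chosen characters of the factors of block $j$ (these are legitimate outputs of Lemma \ref{pinf2}), and choose the character of $A_2$ in block $2$ to be $(\la_1\circ\phi^{-1})^{-1}$, where $\la_1$ is the character of $A_1$ in block $1$; this is still a surjection onto $\Z_{p^\infty}$, so $W_2$ is still $G$-faithful. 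Now let $\al:A\to \overline{K}^*$ restrict to these characters on all factors and put $V=K[\al(A)]$ with $A$ acting by multiplication. Then $V$ is a field (it is generated by roots of unity), hence an irreducible $KA$-module; restricted to each $B_j$ it is a direct sum of copies of $W_j$, so it lies over every $W_j$; yet the diagonal $N=\{a\phi(a):a\in A_1\}$ is a non-trivial normal subgroup of $G$ contained in $\ke\,\al$, so $V$ is not $G$-faithful. Since Lemma \ref{equis2} is a pure existence statement (a maximal left ideal containing the given ones), your construction has no way to exclude this $V$. The paper's proof is forced to do something genuinely different: it takes an algebraically closed field $L\supseteq K$ with $|L|=|I|$, so that $L^*$ splits as its torsion subgroup times a product of $|I|$ independent copies of $\Q$, and builds a \emph{single} character $\al:A\to L^*$ sending countably many factors onto the Pr\"ufer components and the remaining factors onto the copies of $\Q$. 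Independence of the images inside $L^*$ (rather than coprimality of orders) is what forces $\ke\,\al$ to contain no non-trivial normal subgroup of $G$, and irreducibility of $V=L$ comes from $\al(A)=L^*$ and $K[L^*]=L$.
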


\begin{proof} Let $L$ be an extension of $K$ that is algebraically closed and satisfies $|L|=|I|$.
Since $L^*$ is a divisible group, we have $L^*=T\times R$, where
$T$ is the torsion subgroup of $L^*$ and $R$ and is torsion-free.
Moreover, $R=\underset{i\in I}\Pi R_i$, where $R_i\cong \Q$.
Relabelling the factors of $A$, we have $A=B\times C$, where
$B=\underset{n\geq 1 }\Pi B_n$ and $C=\underset{i\in I}\Pi C_i$.
Let $p_1,p_2,\dots$ be the list of all primes different from
$\chr(K)$. For each $p_k$, let $T_{k}$ be the $p_k$-part of $T$.

There is a family of linear characters $\la_i:C_i\to L^*$ such
that $\la_i(C_i)=R_i$ for every $i\in I$ and a family of linear
characters $\mu_k:B_k\to L^*$ such that $\mu_i(B_k)=T_k$. Let
$\al:A\to L^*$ be the linear character determined by the $\la_i$
and $\mu_k$, so that $\al(A)=L^*$.

Every $\la_i$ and $\mu_k$ is non-trivial, each $A_i$ is an
irreducible $\Z G$-module, and the $R_i$ and $T_k$ are
independent. We infer {\color{blue} that} $\al$ is $G$-faithful.
The group $G=L^*$ acts via $K$-linear automorphisms on the
$K$-vector space $V=L$ by multiplication. The composition $A\to
L^*\to\GL(V)$ is a linear representation of $A$ on $V$ over $K$.
As such, it is irreducible since $\al(A)=L^*$, and it is
$G$-faithful as so is $\al$.
\end{proof}




\section{Groups with a faithful irreducible representation}\label{sec:weisner2}

{\color{blue}
\begin{theorem}\label{maif} Let $G$ be a group and let $K$ be a
field such that:

(1) $\Soc(G)$ is essential;

(2) If $K$ has prime characteristic $p$ and $M$ is a minimal
normal subgroup of $G$ as well as a non-abelian $p$-group that is
not finitely generated, then $M$ admits a non-trivial
irreducible representation over $K$.

(3) $\chr(K)\notin\Pi(G)$;

(4) $\T(G)$ has a subgroup $S$ such that $\T(G)/S$ is locally
cyclic and $\core_G(S)=1$.

\noindent Then $G$ has a faithful irreducible representation over
$K$.
\end{theorem}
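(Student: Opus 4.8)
The plan is to reduce everything to the construction of a single $G$-faithful irreducible representation of $\Soc(G)$. Suppose $W$ is such a module, that is, an irreducible $K\Soc(G)$-module whose kernel contains no non-trivial normal subgroup of $G$. By Lemma \ref{elr1} there is an irreducible $KG$-module $U$ lying over $W$, so $W$ is a $K\Soc(G)$-submodule of $U$. The intersection of the kernel of $U$ with $\Soc(G)$ is a normal subgroup of $G$ acting trivially on $W$, hence trivial by $G$-faithfulness; since $\Soc(G)$ is essential by hypothesis (1), the kernel of $U$ is itself trivial, and $U$ is the desired faithful irreducible representation of $G$. So the whole problem is to manufacture $W$.

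Because distinct minimal normal subgroups commute and meet trivially, and each has exactly one of the three types, $\Soc(G)=\SR(G)\times\T(G)\times\F(G)$ internally. I would build a $G$-faithful irreducible representation of each factor and splice them with Lemma \ref{equis2}. For $\F(G)$, presented as a product of irreducible $\Z G$-submodules, Lemmas \ref{pinf2} and \ref{pinf3} already deliver a $G$-faithful irreducible $K$-module. For $\SR(G)=\prod_j M_j$, each non-abelian minimal normal subgroup $M_j$ has a non-trivial irreducible representation over $K$---by Lemma \ref{elr3} unless $\chr(K)=p$ and $M_j$ is a $p$-group, in which case hypothesis (2) covers the non-finitely-generated case and the finitely generated case is the one recalled in the introduction; for a minimal normal subgroup non-triviality is the same as $G$-faithfulness. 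Lemma \ref{equis2} then produces an irreducible $K\SR(G)$-module $V_1$, and it is $G$-faithful because a non-trivial normal subgroup of $G$ inside $\SR(G)$ must contain some $M_j$ (else it would centralize $\SR(G)$ and lie in its trivial centre), and $M_j$ acts non-trivially on $V_1$.

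The torsion factor is where hypotheses (3) and (4) are used. Choose $S\leq\T(G)$ with $\T(G)/S$ locally cyclic and $\core_G(S)=1$. A locally cyclic torsion group embeds in $\Q/\Z$, and since by (3) no prime dividing $\T(G)$ equals $\chr(K)$, the matching roots of unity exist and are independent in an algebraic closure of $K$; this gives an injective character $\lambda\colon\T(G)/S\hookrightarrow L^*$ with $L=K(\lambda(\T(G)))$. Letting $\T(G)$ act on $V_2=L$ by multiplication through $\lambda$, the image of $K\T(G)$ in $\End_K(L)$ is the field $L$ itself, so the $K\T(G)$-submodules of $V_2$ are the $L$-subspaces $0$ and $L$, and $V_2$ is irreducible; it is $G$-faithful since an element acts trivially precisely when it lies in $\ker\lambda=S$ and $\core_G(S)=1$. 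I want to emphasise that this ``$V=L$'' device, exactly as in Lemma \ref{pinf3} rather than a naive scalar restriction through Corollary \ref{xr2}, is what absorbs the possibly infinite extension $L/K$ created when $\T(G)/S$ involves infinitely many primes or quasicyclic factors.

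Finally I would apply Lemma \ref{equis2} to the three factors to get an irreducible $K\Soc(G)$-module $V$ lying over $V_1$, $V_2$, $V_3$, and check it is $G$-faithful. If a non-trivial normal subgroup $N$ of $G$ inside $\Soc(G)$ acts trivially on $V$, then restricting to each submodule $V_i$ and invoking the $G$-faithfulness of $V_1,V_2,V_3$ gives $N\cap\SR(G)=N\cap\T(G)=N\cap\F(G)=1$. Since $[N,\SR(G)]\subseteq N\cap\SR(G)=1$, $N$ centralizes the centreless group $\SR(G)$ and hence lies in $\T(G)\times\F(G)$; writing an element of $N$ as a pair $(t,f)$ with $t\in\T(G)$ of finite order $m$, the power $(t,f)^m$ lies in $\{1\}\times\F(G)$ and in $N$, so in $N\cap\F(G)=1$, whence $f=1$ as $\F(G)$ is torsion-free and then $(t,f)\in N\cap\T(G)=1$; thus $N=1$. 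I expect the genuine obstacles to be the torsion construction---securing irreducibility over $K$ and $G$-faithfulness simultaneously from condition (4)---and this last gluing step, where one must be sure that combining the non-isomorphic ``atomic'' pieces cannot conjure a diagonal normal subgroup of $G$ sitting in the kernel.
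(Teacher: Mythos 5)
Your proposal is correct, and its skeleton coincides with the paper's: decompose $\Soc(G)=\SR(G)\times\T(G)\times\F(G)$, build a $G$-faithful irreducible $K$-module for each factor (Lemma \ref{elr3}, hypothesis (2) and \cite[Theorem 6.3]{Pa2} for $\SR(G)$; Lemmas \ref{pinf2} and \ref{pinf3} for $\F(G)$), glue with Lemma \ref{equis2}, lift to $G$ via Lemma \ref{elr1}, and finish with essentiality. The genuine divergence is the torsion factor, exactly where (3) and (4) enter. The paper works prime by prime: it shows $S\cap\T(G)_p$ is a maximal subgroup of $\T(G)_p$ with $\core_G(S\cap\T(G)_p)=1$, applies Lemma \ref{tor} (which rests on the scalar-restriction Corollary \ref{xr2} for the finite extension $K[\zeta]/K$, $\zeta^p=1$) to get a $G$-faithful irreducible module of each $\T(G)_p$, and then re-glues over $p\in\Pi(G)$ with Lemma \ref{equis2}. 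You instead embed the torsion locally cyclic group $\T(G)/S$ into the roots of unity of an algebraic closure of $K$ --- possible precisely because $\chr(K)\notin\Pi(G)$ --- and run the field-as-module device of Lemmas \ref{pinf} and \ref{pinf3}: with $L=K(\lambda(\T(G)))$ algebraic over $K$, the image of $K\T(G)$ in $\End_K(L)$ is all of $L$, so the submodules of $V_2=L$ are $L$-subspaces and the kernel of the action is exactly $S$. This handles all primes at once and, as you correctly stress, avoids Corollary \ref{xr2}, which would not apply to the possibly infinite extension $L/K$; both routes are sound and use condition (4) in the same way. A secondary difference: where the paper cites \cite[3.3.11 and 3.3.12]{Ro} for the faithfulness of the glued modules, you verify these facts directly (a non-trivial normal subgroup of $G$ inside $\SR(G)$ must contain some $M_j$ since the $M_j$ have trivial centres; a normal subgroup meeting all three factors trivially centralizes $\SR(G)$, hence lies in $\T(G)\times\F(G)$, and is then killed by the torsion/torsion-free splitting), and these verifications are correct. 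The net effect is an argument that is marginally more self-contained than the paper's, at the cost of redoing in-line what the paper delegates to Lemma \ref{tor} and to Robinson's book.
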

}

{
\color{blue}
\begin{proof} We first show that $\SR(G)$ has a $G$-faithful irreducible module over
$K$.  By \cite[3.3.11]{Ro}, we have $\SR(G)=\underset{i\in I}\Pi S_i$, where each $S_i$ is a non-abelian
minimal normal subgroup of $G$. Let $i\in I$. If it is not the case that $K$ has prime characteristic $p$ and $S_i$
is a $p$-group, then Lemma \ref{elr3} ensures that $S_i$
has a non-trivial irreducible representation over $K$. If $S_i$ is finitely generated then, being perfect,
it has a non-trivial irreducible representation over $K$ by \cite[Theorem
6.3]{Pa2}. If $K$ has prime characteristic $p$ and $S_i$
is a $p$-group that is not finitely generated, then $S_i$ has a non-trivial irreducible representation over $K$ by condition (2).
Thus, in any case, there is a non-trivial irreducible $KS_i$-module, say $V_i$.
By Lemma \ref{equis2}, there is an irreducible $K \SR(G)$-module $V$ lying over
all $V_i$. It then follows from \cite[3.3.12]{Ro} that $V$ is $G$-faithful.

Next note that, by \cite[3.3.11]{Ro}, we have $\F(G)=\underset{i\in I}\Pi A_i$, where each $A_i$ is a torsion-free abelian minimal normal subgroup of $G$. It then follows from Lemmas \ref{pinf2}
and \ref{pinf3} that $\F(G)$ has a $G$-faithful irreducible module $W$ over
$K$.

Finally we show that $\T(G)$ has a $G$-faithful irreducible module over
$K$. Indeed, let $S$ be the subgroup of $\T(G)$ ensured by condition (4). Then
$$
S=\underset{p\in \Pi(G)}\bigoplus S\cap \T(G)_p.
$$
Since $\T(G)/S$ is locally cyclic and $\core_G(S)=1$, we have
$$
[\T(G)_p:S\cap \T(G)_p]=p,\quad p\in\Pi(G).
$$
Obviously, we still have $\core_G(S\cap \T(G)_p)=1$. On the other hand, condition (3) ensures $\chr(K)\notin\Pi(G)$.
It thus follows from Lemma \ref{tor} that, for every $p\in\Pi(G)$, there is a $G$-faithful irreducible module $X_p$
of $\T(G)_p$ over $K$. By Lemma \ref{equis2}, there is an irreducible $K \T(G)$-module $X$ lying over
all $X_p$. It is clear that $X$ must be $G$-faithful.

Since $\Soc(G)=\SR(G)\times \F(G)\times T(G)$, Lemma \ref{equis2} ensures the existence of an irreducible $\Soc(G)$-module $Y$
over $K$ lying over $V$, $W$ and $X$. It follows from \cite[3.3.11 and 3.3.12]{Ro} that $Y$ is $G$-faithful.

By Lemma \ref{elr1}, there is an irreducible $KG$-module $Z$ lying over $Y$. By condition~(1), $\Soc(G)$ is essential,
so $Z$ must be faithful.
\end{proof}
}



\begin{lemma}\label{gulash} Let $N\unlhd G$ be groups, where $N$
is abelian of prime exponent $p$. Let $K$ be a field and suppose
that $V$ admits a faithful irreducible $KG$-module. Let $H$ be any
subgroup {\color{blue} of $C_G(N)$}. Assume that $V$ has an
irreducible $KH$-submodule $W$. Then $\chr(K)\neq p$ and there is
a maximal subgroup $M$ of $N$ such that $\core_G(M)=1$.
\end{lemma}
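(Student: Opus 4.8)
The plan is to extract a single linear character of $N$ from $W$ and read both conclusions off it. The mechanism is that $H$ centralizes $N$, so whenever $N\le H$ the subgroup $N$ is central in $H$ and therefore acts on the irreducible $KH$-module $W$ through $\End_{KH}(W)$ by Schur's lemma. I will carry out the argument in this case $N\le H$ (the one relevant to the intended application, e.g. $H=C_G(N)\ni N$, where the stated hypothesis supplies an irreducible $KH$-submodule $W$), and indicate at the end the difficulty in removing the restriction. Concretely, each $n\in N$ acts on $W$ as an element of $D=\End_{KH}(W)$, and since $n$ commutes with the entire $H$-action this element lies in $Z(D)$. Thus restriction to $W$ defines a homomorphism $\chi\colon N\to Z(D)^\times$, and because $N$ has exponent $p$ the image $\chi(N)$ consists of $p$-th roots of unity in the field $Z(D)$; hence $N/\ker\chi$ embeds into a group of $p$-th roots of unity and so has order $1$ or $p$.

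The first key step is the dichotomy governed by $\chi$. If $\chi$ is trivial then $N$ fixes $W$ pointwise, so $0\ne W\subseteq V^N$. Here $V^N$ is a $G$-submodule: for $g\in G$ and $v\in V^N$ one has $n(gv)=g(g^{-1}ng)v=gv$, using $g^{-1}ng\in N$ (as $N\unlhd G$) and $v\in V^N$. Irreducibility then forces $V^N=V$, i.e. $N$ acts trivially on $V$, contradicting faithfulness since $N\ne 1$. Now if $\chr(K)=p$ the field $Z(D)$ has characteristic $p$ and contains no nontrivial $p$-th root of unity, so $\chi$ is forced to be trivial — impossible. This proves $\chr(K)\ne p$.

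For the second conclusion I work in the established case $\chr(K)\ne p$, where the same dichotomy now forces $\chi$ to be \emph{nontrivial}; hence $M:=\ker\chi$ has index $p$ in the elementary abelian $p$-group $N$ and is a maximal subgroup. It remains to compute $\core_G(M)$. Since $V$ is irreducible and $W\ne 0$, we have $V=\sum_{g\in G}gW$. For $n\in N$ and $g\in G$ one checks $n|_{gW}=\mathrm{id}$ iff $g^{-1}ng\in M$, i.e. $gMg^{-1}=\{n\in N: n|_{gW}=\mathrm{id}\}$. Intersecting over all $g$,
$$
\core_G(M)=\bigcap_{g\in G}gMg^{-1}=\{n\in N: n|_{gW}=\mathrm{id}\ \text{for all } g\}=\{n\in N: n|_V=\mathrm{id}\}=1,
$$
the final equality being faithfulness of $V$. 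This produces the required maximal subgroup.

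The step I expect to be the main obstacle is the reduction to $N\le H$. For a general $H\le C_G(N)$ not containing $N$, an element $n\in N$ need not preserve $W$ (it only commutes with $H$, hence sends $W$ to another copy of $W$ inside the $H$-isotypic component $V_W$), so there is no character $\chi$ to start from. The natural fix is to replace $W$ by an irreducible $K(HN)$-submodule of $V_W$ and then apply the central-$N$ argument to $HN$; but $N$ acts on the multiplicity space of $W$ in $V_W$, which is only a \emph{cyclic} $KN$-module, and when $N$ is infinite such a module can have zero socle, so the desired irreducible submodule need not exist. Making this reduction rigorous — or simply observing that in the application $H$ may be chosen to contain $N$ — is the delicate point; everything downstream is the dimension-free Schur computation above.
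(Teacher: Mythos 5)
Your proof is correct and essentially the same as the paper's: both extract a linear character of $N$ from its scalar action on $W$ (the paper routes this through a maximal subfield $L\supseteq K$ of $D=\End_{KH}(W)$, with $\End_{LH}(W)=L$, rather than through $Z(D)$ as you do --- a cosmetic difference), both derive $\chr(K)\neq p$ from the trivial/non-trivial dichotomy for that character, and both obtain $\core_G(M)=1$ for $M$ the kernel via $V=\sum_{g\in G} gW$ and faithfulness. Your closing caveat about needing $N\le H$ is not a defect relative to the paper: its proof presupposes exactly the same thing by asserting that ``$N$ is a central subgroup of $H$'', and in the paper's sole application of the lemma one has $H=C_G(M)\supseteq M$, which is precisely the case you treat.
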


\begin{proof} Since $W$ is $KH$-irreducible, $D=\End_{KH}(W)$ is a division ring.
Let $L$ be a maximal subfield of $D$ containing $K$. Then $W$ is
an irreducible $LH$-module and $\End_{LH}(W)=L$. Since $N$ is a
central subgroup of $H$, it follows that $N$ acts on $W$ via
scalar operators given a linear character, say $\la:N\to L^*$.

Suppose, if possible, that $\la$ is trivial. Since $N\unlhd G$ and
$V$ is irreducible, we see that $N$ acts trivially on $V$, against
the fact that $G$ acts faithfully on it. Thus, $\la$ is
non-trivial, so $L$ has a root of unity of order $p$, whence
$p\neq \chr(L)=\chr(K)$. Let $M=\ke(\la)$. Then $[N:M]=p$, so $M$
is a maximal subgroup of $N$. Moreover, since $V$ is
$KG$-irreducible, we have $V=\underset{g\in G}\sum gW$, so
$1=\ke_G(V)\supseteq \underset{g\in G}\cap gM g^{-1}$.
\end{proof}

\begin{theorem} Let $G$ be a group such that $[G:C_G(N)]<\infty$
for every minimal normal subgroup $N$ of $G$ contained in $T(G)$.
Let $K$ be a field and suppose that $G$ admits a faithful
irreducible module $V$ over $K$. Then $\chr(K)\notin\Pi(G)$ and
$\T(G)$ has a subgroup $S$ such that $\T(G)/S$ is locally cyclic
and $\core_G(S)=1$.
\end{theorem}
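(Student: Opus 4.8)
The plan is to treat the two conclusions separately, in both cases feeding Lemma \ref{gulash} with a normal subgroup of $G$ contained in $\T(G)_p$ on which the restriction of the given faithful irreducible module $V$ can be analyzed. I would begin with a structural observation: if $N$ is a minimal normal subgroup of $G$ contained in $\T(G)$, then $N$ is an irreducible $F_pG$-module on which $C_G(N)$ acts trivially, hence an irreducible module for the \emph{finite} group $G/C_G(N)$, and therefore $N$ is finite. For the first conclusion, fix $p\in\Pi(G)$ and choose such an $N$ inside the nonzero completely reducible module $\T(G)_p$. Since $[G:C_G(N)]<\infty$, the module $V$ has an irreducible $KC_G(N)$-submodule $W$ (this is exactly the finite-index case \cite[Theorem 7.2.16]{Pa} recalled in the Note following Lemma \ref{elr2}). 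Applying Lemma \ref{gulash} with this $N$, with $H=C_G(N)$, and with $W$, yields $\chr(K)\neq p$; as $p\in\Pi(G)$ was arbitrary, $\chr(K)\notin\Pi(G)$.

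For the second conclusion I would first reduce to a single prime. Writing $\T(G)=\bigoplus_{p\in\Pi(G)}\T(G)_p$, every subgroup $S$ is the direct sum of the $S\cap\T(G)_p$, so $\T(G)/S=\bigoplus_p \T(G)_p/(S\cap\T(G)_p)$. A torsion locally cyclic group embeds in $\Q/\Z$, and since each $\T(G)_p$ has exponent $p$ this forces every factor $\T(G)_p/(S\cap\T(G)_p)$ to be cyclic of order $1$ or $p$; the requirement $\core_G(S)=1$ then rules out order $1$. Thus it suffices to produce, for each $p\in\Pi(G)$, a subgroup $S_p\le\T(G)_p$ of index $p$ with $\core_G(S_p)=1$, and to set $S=\bigoplus_p S_p$.

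To build $S_p$ I would again invoke Lemma \ref{gulash}, but now with $N=\T(G)_p$ itself and $H=C_G(\T(G)_p)$. Here $\T(G)_p$ is normal of exponent $p$ and central in $H$. Granting that $V$ has an irreducible $KH$-submodule $W$, the central subgroup $\T(G)_p$ acts on $W$ through a linear character $\chi:\T(G)_p\to L^*$ (after passing, as in the proof of Lemma \ref{gulash}, to a maximal subfield $L$ of the division ring $\End_{KH}(W)$); Lemma \ref{gulash} then gives that $\chi$ is nontrivial, so that $S_p:=\ker\chi$ has index $p$, and that $\core_G(S_p)=1$, because $V=\sum_{g\in G}gW$ is faithful. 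This would finish the argument.

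The one step that is not automatic, and which I expect to be the main obstacle, is the existence of an irreducible $KH$-submodule $W$ when $H=C_G(\T(G)_p)$: the hypothesis controls only the centralizers of \emph{minimal} normal subgroups, and when $\T(G)_p$ is an infinite sum of them the index $[G:C_G(\T(G)_p)]$ may be infinite, so \cite[Theorem 7.2.16]{Pa} no longer applies; indeed the Note after Lemma \ref{elr2} shows that irreducible submodules can fail to exist over infinite-index normal subgroups. My plan for this step is to exhaust $\T(G)_p$ by an increasing chain of finite $G$-submodules $F_1\le F_2\le\cdots$ (finite partial sums of the now-known-to-be-finite minimal normal subgroups), each with $C_G(F_k)$ of finite index; at each stage \cite[Theorem 7.2.16]{Pa} supplies a descending chain of irreducible $KC_G(F_k)$-submodules on which the central subgroups $F_k$ act through compatible characters assembling to a character of $\T(G)_p$. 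The delicate point is to guarantee that the resulting joint eigenspace $\mathrm{soc}_{K\T(G)_p}(V)$ is nonzero. For this I would apply Lemma \ref{elr2} with $B=KG$, $A=K\T(G)_p$ and the $u_i$ a transversal of $\T(G)_p$ in $G$ (legitimate since $\T(G)_p\unlhd G$), reducing the existence of a minimal $K\T(G)_p$-submodule to that of a maximal one, the latter being accessible from the fact that $V$ has finite length as a $KC_G(F_1)$-module.
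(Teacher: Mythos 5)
Your first conclusion ($\chr(K)\notin\Pi(G)$) and your reduction of the second conclusion to producing, for each $p\in\Pi(G)$, a subgroup $S_p$ of index $p$ in $\T(G)_p$ with $\core_G(S_p)=1$, are both correct and agree with the paper. The gap is exactly at the step you flag as the main obstacle, and your proposed repair does not close it. You need an irreducible $K\T(G)_p$-submodule of $V$ (equivalently, via Lemma \ref{elr2}, a maximal one), and your justification --- that $V$ has finite length as a $KC_G(F_1)$-module --- does not deliver this: finite length over the larger algebra $KC_G(F_1)$ gives no control over the $K\T(G)_p$-submodule lattice, because $\T(G)_p$ will in general have infinite index in $C_G(F_1)$. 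The Note in \S\ref{sec:resca} is precisely a counterexample to this kind of descent: a module of length one over a big ring whose restriction to a subring has neither maximal nor minimal submodules. A secondary problem is that exhausting $\T(G)_p$ by a countable chain $F_1\le F_2\le\cdots$ presumes $\T(G)_p$ is countably generated, which is not part of the hypotheses. More fundamentally, your strategy requires realizing a single character of all of $\T(G)_p$ on a common eigenvector inside $V$, an infinite intersection condition that the hypotheses give no way to verify.

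The paper's proof avoids ever centralizing all of $\T(G)_p$ at once, using an observation you are missing: if $N$ is an irreducible $F_pG$-submodule of $\T(G)_p$ and $M$ is its $F_pG$-homogeneous component, then $C_G(M)=C_G(N)$, because an $F_pG$-isomorphism between irreducible submodules is $G$-equivariant, so isomorphic irreducibles have the same centralizer. Hence $[G:C_G(M)]<\infty$ by hypothesis, even though $M$ may be an infinite product of copies of $N$. Applying \cite[Theorem 7.2.16]{Pa} and Lemma \ref{gulash} to the pair consisting of the normal subgroup $M$ and $H=C_G(M)$ yields $\chr(K)\neq p$ together with a linear character $\la_M$ of $M$ satisfying $\core_G(\ke\,\la_M)=1$. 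These characters are then glued \emph{abstractly}, not inside $V$: since $\T(G)_p$ is the direct product of its homogeneous components, there is a unique character $\la_p$ of $\T(G)_p$ extending all the $\la_M$, and $S_p=\ke\,\la_p$ has index $p$. Its core is trivial because any non-trivial normal subgroup of $G$ contained in $\T(G)_p$, being a submodule of a completely reducible module, meets some homogeneous component non-trivially and is then caught by $\core_G(\ke\,\la_M)=1$. So the finite-index hypothesis is only ever invoked component by component, where it genuinely applies, and no common eigenvector in $V$ for all of $\T(G)_p$ is needed.
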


{\color{blue}
\begin{proof} Given $p\in\Pi(G)$, let $N$ be an irreducible $F_p
G$-submodule of $T(G)_p$ with $F_p G$-homogeneous component $M$. Since
$[G:C_G(N)]<\infty$, \cite[Theorem 7.2.16]{Pa} ensures that $V$
has an irreducible $KH$-submodule, where $H=C_G(N)=C_G(M)$. It
follows from Lemma \ref{gulash} that $\chr(K)\neq p$ and there is
linear character $\la_M:M\to K[\zeta]$, $\zeta^p=1$, such that
\begin{equation}
\label{ifin22} \core_G(\ke\la_M)=1.
\end{equation}
Since $T(G)_p$ is the direct product of its
$F_p G$-homogeneous components there is a unique linear character $\la_p:T(G)_p\to K[\zeta]$
extending all $\la_M$. If we set $S_p=\ke \la_p$, then $[T(G)_p:S_p]=p$. A non-trivial normal subgroup of $G$
contained in $T(G)_p$ must intersect at least one $F_p G$-homogeneous component of $T(G)_p$ non-trivially. This
and (\ref{ifin22}) yield $\core_G(S_p)=1$. Since
$T(G)$ is the direct product of all $T(G)_p$, it is now clear that
the direct product $S$ of all $S_p$, satisfies the stated
requirements.
\end{proof}
}

\begin{exa}{\rm Here we extend the example given at the end of \S\ref{sec:h}. Let $F$ be a field of prime characteristic $p$,
let $V$ be an $F$-vector space of dimension $>1$ (not necessarily
finite), and set $G=V\rtimes F^*$. We claim that, provided $F$ is
finite, $G$ has no faithful irreducible representation over any
field. Indeed, if $|F|<\infty$ then $[G:V]<\infty$, so in view of
\cite[Theorem 7.2.16]{Pa} and Lemma \ref{gulash}, we are reduced
to show that every $F_p$-hyerplane of $V$ contains a non-zero
$F$-subspace of $V$. This verification can be further reduced to
the case $\dm_{F}(V)=2$, in which case a routine counting argument
yields the desired result (which fails, in general, for $F$
infinite).}
\end{exa}

{\color{blue} For reference elsewhere, we next show that Mclain's
group (which has no minimal normal subgroups) admits faithful
irreducible representations in every possible characteristic. }

\begin{exa}{\rm Let $M$ be the McLain group \cite{M} defined over a division ring $D$ and let $K$ be any field.
If $\chr(D)=\chr(K)=p$ is prime the only irreducible
representation of $M$ over $K$ is the trivial one. In every other
case, $M$ has a faithful irreducible representation over $K$.

The first assertion follows from the fact that when $\chr(D)=p$ is
prime, $M$ is a locally finite $p$-group. For each $\al<\bt\in\Q$
consider the subgroup $N_{\al,\bt}=\{1+ae_{\al,\bt}\,|\, a\in D\}$
of $M$. Let $I$ be the set of all pairs $(\al,\bt)\in\Q\times\Q$
such that $\al<0$ and $1<\bt$ and let $N$ be the subgroup of $M$
generated by all $N_{\al,\bt}$ with $(\al,\bt)\in I$. Then $N$ is
an essential normal abelian subgroup of $M$, equal to the direct
product of all $N_{\al,\bt}$ with $(\al,\bt)\in I$.

Suppose first $D$ has prime characteristic $p\neq\chr(K)$. For
each $(\al,\bt)\in I$ let $\la_{\al,\bt}:N_{\al,\bt}\to
K[\zeta]^*$, $\zeta^p=1$, be a non-trivial linear character and
let $\la:N\to K[\zeta]^*$ be the unique extension of the
$\la_{\al,\bt}$ to $N$. Then $\la$ is $M$-faithful, for every
non-trivial normal subgroup of $M$ contains at least one (in fact,
infinitely many) $N_{\al,\bt}$ with $(\al,\bt)\in I$. It follows
from Lemma \ref{elr1} that $M$ has a faithful irreducible module
$U$ over $K[\zeta]$. By {\color{blue} Corollary} \ref{xr2}, there
is an irreducible $KM$-submodule $V$ of $U$. Since $K[\zeta]V=U$,
it is clear that $V$ is faithful.

Suppose next that $\chr(D)=0$. Choose any $0\neq x\in D^+$ and any
prime $p\neq\chr(K)$. Then there is a non-trivial linear character
$\mu:\langle x\rangle\to K[\zeta]^*$, where $\zeta^p=1$. By Lemma
\ref{elr1}, there is an irreducible $D^+$-module $U$ over
$K[\zeta]$ lying over~$\mu$. For each $(\al,\bt)\in I$ let
$U_{\al,\bt}$ be the irreducible $N_{\al,\bt}$-module over
$K[\zeta]$ obtained from $U$ via the isomorphisms $D^+\to
N_{\al,\bt}$ given by $a\mapsto 1+ae_{\al,\bt}$. By Lemma
\ref{equis2}, there is an irreducible $N$-module $V$ over
$K[\zeta]$ lying over all $U_{\al,\bt}$. This implies, as before,
that $M$ has a faithful irreducible module over $K$. }
\end{exa}

\section{Necessary and sufficient conditions for nilpotent groups}

\begin{prop} Let $G$ be a nilpotent group. Let $Z$ be the center of $G$ and let $T$ be the torsion subgroup of $Z$.
Then the following conditions are equivalent:

(a) $G$ admits a faithful irreducible representation.

(b) $Z$ is isomorphic to a subgroup of the multiplicative group of
a field.

(c) $Z$ admits a faithful irreducible representation.

(d) $T$ is locally cyclic.

(e) $T$ is a subgroup of $\Q/\Z$.

(f) For each prime $p$, the $p$-part of $T$ is a subgroup of
$\Z_{p^\infty}$.
\end{prop}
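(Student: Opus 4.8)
The plan is to establish the chain of equivalences by proving a cycle together with the elementary equivalences among the purely group-theoretic conditions (d), (e), (f). The governing principle is that for a nilpotent group, having a faithful irreducible representation is controlled entirely by the center, and the center's representation theory is in turn governed by its torsion part. I would organize the argument as follows: first dispose of the ``easy'' equivalences (b) $\Leftrightarrow$ (c) $\Leftrightarrow$ (d) $\Leftrightarrow$ (e) $\Leftrightarrow$ (f), which are statements about the abelian group $Z$ and its torsion subgroup $T$; then prove (a) $\Rightarrow$ (b) using the necessity of a cyclic-type center; and finally prove (d) $\Rightarrow$ (a) by invoking Theorem~\ref{maif}.

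\emph{The abelian equivalences.} For a torsion abelian group $T$, being locally cyclic is equivalent to embedding in $\Q/\Z$, and this is equivalent to each $p$-part embedding in $\Z_{p^\infty}$; these are standard facts about abelian groups (a locally cyclic torsion group is the restricted direct product of its $p$-parts, each of which is locally cyclic, hence a subgroup of $\Z_{p^\infty}$). This handles (d) $\Leftrightarrow$ (e) $\Leftrightarrow$ (f). For (b) $\Leftrightarrow$ (d): the torsion subgroup of the multiplicative group of any field is locally cyclic (a finite subgroup of $F^*$ is cyclic), so (b) forces (d); conversely, if $T$ is locally cyclic, I would build an embedding of $Z$ into $L^*$ for a suitable field $L$, splitting off the torsion part $T$ (which embeds in the roots of unity) from a torsion-free complement (which embeds in the positive rationals inside $\R^*$, or more robustly into a large enough field as in Lemma~\ref{pinf3}). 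For (b) $\Leftrightarrow$ (c): an abelian group has a faithful irreducible representation over some field if and only if it embeds in the multiplicative group of a field, since a one-dimensional faithful representation is exactly such an embedding, and irreducibility of a representation of an abelian group over an algebraically closed field forces dimension one.

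\emph{The necessity (a) $\Rightarrow$ (b).} Suppose $G$ has a faithful irreducible module $V$ over $K$. Passing to a maximal subfield $L$ of the division ring $\End_{KG}(V)$, the central subgroup $Z$ acts by scalars through a linear character $\la\colon Z\to L^*$ (this is the same mechanism used in the proof of Lemma~\ref{gulash}). Because $Z$ is central in $G$ and $V$ is faithful, $\la$ must be injective, so $Z$ embeds into $L^*$, giving (b).

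\emph{The sufficiency (d) $\Rightarrow$ (a).} This is where I expect the main work, and I would route it through Theorem~\ref{maif}. The key structural fact is that in a nilpotent group the center is essential: every nontrivial normal subgroup meets $Z$ nontrivially, so $Z$ is essential and hence $\Soc(G)\supseteq$ the socle lives inside $Z$ and is itself essential, giving condition (1). Since $G$ is nilpotent, $\Soc(G)$ is central and abelian, so there are no non-abelian minimal normal subgroups and condition (2) is vacuous. The remaining conditions (3) and (4) must be arranged by choosing $K$ appropriately and using the hypothesis that $T$ is locally cyclic: one selects $\chr(K)\notin\Pi(G)$, and the local cyclicity of $T$ (equivalently of $\T(G)$, which sits inside $T$) is exactly what produces a subgroup $S$ with $\T(G)/S$ locally cyclic and $\core_G(S)=1$. \textbf{The main obstacle} will be verifying condition (4) cleanly: I must produce the subgroup $S$ of $\T(G)$ whose quotient is locally cyclic and whose $G$-core is trivial, and confirm that essentiality of $Z$ forces $\core_G(S)=1$ precisely because any nontrivial normal subgroup of $G$ would intersect the locally cyclic torsion part and thus fail to be contained in $S$. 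Once all four hypotheses of Theorem~\ref{maif} are checked, it delivers a faithful irreducible representation, completing the cycle.
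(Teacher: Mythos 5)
Your route back to (a) is where the proof breaks. The implication (d) $\Rightarrow$ (a) cannot go through Theorem \ref{maif}, because condition (1) of that theorem genuinely fails for most nilpotent groups: essentiality of the center does \emph{not} give essentiality of the socle. For nilpotent $G$ every minimal normal subgroup meets the center, hence is contained in it, and is therefore a subgroup of prime order of $Z$; thus $\Soc(G)$ is just the socle of $T$. If $Z$ has any torsion-free part this subgroup is not essential, and it may well be trivial: $G=\Z$ satisfies (d) (here $T=1$) and has a faithful irreducible representation ($1\mapsto 2\in\Q^*$), yet $\Soc(G)=1$; likewise for $G=\Z\times \Z/2\Z$ the socle $1\times\Z/2\Z$ meets the normal subgroup $\Z\times 1$ trivially. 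So Theorem \ref{maif} is simply inapplicable, and your chain of equivalences never returns to (a). The paper closes the cycle differently, proving (c) $\Rightarrow$ (a) directly: given a faithful irreducible $KZ$-module $V$, Lemma \ref{elr1} produces an irreducible $KG$-module $U$ lying over $V$; if a non-trivial normal subgroup $N$ of $G$ acted trivially on $U$, then by nilpotency $N\cap Z\neq 1$ would be a non-trivial normal subgroup acting trivially on $U$, hence on $V$, contradicting faithfulness of $V$ as a $Z$-module. Note that this uses precisely the structural fact you isolated (non-trivial normal subgroups of nilpotent groups meet the center), but applies it to the kernel of the overlying module rather than attempting to force the socle hypothesis of Theorem \ref{maif}.

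Two smaller gaps. First, your sketch of (d) $\Rightarrow$ (b) splits $Z$ as $T$ times a torsion-free complement, but the torsion subgroup of an abelian group need not be a direct summand; one must first embed $Z$ in its divisible hull (whose torsion subgroup is still locally cyclic and does split off) before arguing this way. The paper avoids the issue entirely by quoting Cohn's theorem \cite{C} for this direction. Second, in (c) $\Rightarrow$ (b) the appeal to irreducible representations of abelian groups over algebraically closed fields being one-dimensional does not apply as stated, since $K$ is arbitrary and $V$ may be infinite-dimensional; the clean argument is that an irreducible module over the commutative ring $KZ$ has the form $KZ/M$ with $M$ a maximal ideal, so it is a field on which $Z$ acts through a character into its unit group --- or, equivalently, apply your own (a) $\Rightarrow$ (b) argument to the nilpotent group $Z$ itself, which is exactly how the paper arranges its cycle (a) $\Rightarrow$ (b) $\Rightarrow$ (c) $\Rightarrow$ (a).
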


\begin{proof} Suppose first that $V$ is a faithful irreducible $G$-module over a field $K$ and
let $D=\End_{KG}(V)$. Let $L$ be a maximal subfield $L$ of $D$
containing $K$. Then $L=\End_{LG}(V)$, which yields an injective
group homomorphism $Z\to L^*$. This shows that (a) implies (b),
which obviously implies (c).

Suppose next that $V$ is a faithful irreducible $Z$-module over a
field $K$. By Lemma \ref{elr1} there is an irreducible $KG$-module
$U$ lying over $V$. Suppose, if possible, that $N$ is a
non-trivial normal subgroup of $G$ acting trivially on $V$. Since
$G$ is nilpotent, $N\cap Z(G)$ is a non-trivial normal subgroup of
$G$ acting trivially on $U$ and hence on~$V$, a contradiction.
Thus (c) implies (a).

Clearly (b) implies (d). The converse was proven by Cohn \cite{C}.
It is well-known that (d),(e) and (f) are equivalent.
\end{proof}




{
\color{blue}
\noindent{\bf Acknowledgments.} I thank B. Gilligan for his help
with German translation as well as R. Guralnick, A. Olshanskiy, D. Passman, A. Tushev and the referee
for useful comments.
}

\end{document}